\tikzset{
  math to/.tip={Glyph[glyph math command=rightarrow]},
  math double/.tip={Glyph[glyph math command=twoheadrightarrow]},
  loop/.tip={Glyph[glyph math command=looparrowleft, swap]},
  weird/.tip={Glyph[glyph math command=Rrightarrow, glyph length=1.5ex]},
  pi/.tip={Glyph[glyph math command=pi, glyph length=1.5ex, glyph axis=0pt]},
  pool/.tip={Glyph[glyph math command=looparrowleft]},
}
\newtheorem{thm}{Theorem}[section]
\newtheorem{cor}[thm]{Corollary}
\newtheorem*{cor-no}{Corollary}
\newtheorem*{thm-no}{Theorem}
\newtheorem{prop}[thm]{Proposition} 
\newtheorem{lem}[thm]{Lemma}
\newtheorem{mainthm}{Theorem}
\newtheorem{corintro}[mainthm]{Corollary}
\newtheorem{mainprop}[mainthm]{Proposition}
\theoremstyle{definition}
\theoremstyle{remark}
\newtheorem{remark}[thm]{Remark}
\newtheorem{mainexample}[mainthm]{Example}
\newtheorem{example}[thm]{Example}
\newcommand{\define}{\textit}
	\def\command@factory#1{%
		\expandafter\def\csname cal#1\endcsname{\mathcal{#1}}
		\expandafter\def\csname frak#1\endcsname{\mathfrak{#1}}
		\expandafter\def\csname scr#1\endcsname{\mathscr{#1}}
		\expandafter\def\csname bb#1\endcsname{\mathbb{#1}}
		\expandafter\def\csname rm#1\endcsname{\mathrm{#1}}
		\expandafter\def\csname bf#1\endcsname{\mathbf{#1}}
	}
\newcommand{\mo}{{-1}}
\newcommand{\onto}{\ensuremath{\twoheadrightarrow}}
\newcommand{\bk}[1]{{\left\langle #1 \right\rangle}}
\newcommand{\torus}[1]{{{\bbT_{#1}}}}
\newcommand{\verts}[1]{{{#1}^{(0)}}}
\newcommand{\edges}[1]{{{#1}^{(1)}}}
\newcommand{\aut}[1]{{\mathrm{Aut}\left(#1\right)}}
\DeclareMathOperator{\Aut}{Aut}
\newcommand{\out}[1]{{\mathrm{Out}\left(#1\right)}}
\newcommand{\ad}[1]{{\mathrm{ad}_{#1}}}
\title{Reducing the conjugacy problem for relatively hyperbolic
  automorphisms to peripheral components}
\author{Fran\c{c}ois Dahmani, Nicholas Touikan}
\begin{document}

\maketitle
\begin{abstract}
 We  give  a  reduction  of  the  conjugacy  problem  among  outer automorphisms of free (and torsion-free  hyperbolic) groups to specific algorithmic problems pertaining to mapping tori of polynomially growing automorphisms.  We explain how to use this reduction and solve the conjugacy problem for several new classes of outer automorphisms. This proposes a path towards a full solution to the conjugacy problem for $\out{F_n}$.
\end{abstract}

\tableofcontents

\section*{Introduction}

\subsubsection*{Context and problem}

The decidability of the conjugacy problem in $\out G$, the outer automorphism group of  a torsion-free hyperbolic group $G$, and in particular the conjugacy problem in $\out{F_n},$ remains  an outstanding conjecture in geometric group theory. While this conjecture has been established in various specific cases, as well as for certain subclasses of outer automorphisms (see \cite{FH, D_tams, Se, FrancavigliaMartino}), the general case remains mostly open.

Given a word metric on $G$, the dynamics of an automorphism $\alpha$ on $G$ distinguish specific elements and their conjugacy classes, namely those that are polynomially growing under $\alpha$. Assuming $G$ is torsion-free hyperbolic, a  theorem of Levitt and Lustig \cite{Lev09} (see also \cite{DK}) implies that polynomially growing elements form a family of subgroups. 

The objective of this work is twofold: firstly, to completely reduce the conjugacy problem in $\out{G}$ to problems relating to the restrictions of automorphisms to polynomially growing subgroups

and secondly to provide new classes of outer automorphisms in which we can decide conjugacy.

It is tempting to see the first objective as an analogue of a reduction for automorphisms of finite dimensional vector spaces: conjugacy classes of matrices in $GL(n,{\mathbb{C}})$ are classically understood throught their restriction to generalised eigenspaces on which they induce certain dynamics. Exponential growth in these dynamics is related to eigenvalues of modulus $\neq 1$.So our reduction appears as an analogue of a reduction to generalised eigenspaces of modulus 1 eigenvalues, i.e. to those eigenspaces where growth is polynomial. We will give examples of applications after stating our main result.

\subsubsection*{Main result}

We now give the terminology and notation to state our main result. The mapping torus of a group $A$ for an automorphism $\beta$,  is $A\rtimes_\beta \langle t \rangle$. Its fiber is $A$, its orientation is the coset $tA$. 

Let $\alpha$ be an automorphism of $G$. For each $A$ among the maximal subgroups of $G$  on which $\alpha$ has polynomial growth, there is a minimal $k_A >0$ and an element $g_A\in G$ such that $A$ is invariant under $\beta= \ad{g_A} \circ \alpha^{k_A}$, and this allows to us consider the mapping torus $A\rtimes_\beta \mathbb{Z}$ induced by $\alpha$, which  encodes the dynamics of $\alpha$ on $A$. We call these subgroups the \emph{maximal polynomial growth sub-mapping-tori of $\alpha$.} 

The configuration of the maximal polynomial growth sub-mapping-tori of the mapping torus of $\alpha$ in $\out G$ is a useful conjugacy invariant. Our result not only confirms this, it actually states that once given solutions to problems that are completely intrinsic to these sub-mapping-tori, it is then possible to independently compute all the remaining information needed to distinguish conjugacy classes in $\out G$. Here are algorithmic properties we will require for our polynomial growth sub-mapping-tori:
 \begin{itemize}
     \item A class of groups, that is closed under taking finitely generated subgroups, is \emph{hereditarily algorithmically} tractable (see \cite{Tou}) if its members are  effectively coherent, with recursively enumerable presentations,  and have uniformly solvable conjugacy problem and generation problem. 
     \item A class of groups has \emph{effective congruences separating torsion} (see \cite{DT_invent_2019}) if there is an algorithm that, given any group $H$ in that class, will output a characteristic finite index subgroup $H_0$ such that $\out{H} \to \out{H/H_0} $ has torsion free kernel. 
    \item A class of mapping tori has solvable \emph{fiber-and-orientation-preserving mixed Whitehead problem}   on a family of tuples of group elements, if the orbit problem for the action of fiber-and-orientation-preserving outer automorphism groups on tuples of conjugacy classes of tuples of elements from this family,  is solvable (see \cite{BV2011}).
 \end{itemize}
 
Although our original motivation was the conjugacy problem in $\out{F_n}$, our proof relies on relative hyperbolicity in such a way that the generalization to arbitrary torsion-free hyperbolic groups is immediate.

\begin{mainthm}\label{thmintro}
Let $G$ be a residually finite torsion-free hyperbolic group, and consider a class $\calO\calA$ of outer automorphisms  of $G$. Let $\calP$ be the collection of all maximal polynomial growth sub-mapping-tori given by the elements in  $\calO\calA$  and let $\calP'$ be the class of their subgroups that arise as the images of edge groups of the  JSJ decompositions of mapping tori of automorphisms in $\calO\calA$. Assume that   $\calP$  and $\calP'$ satisfy the following algorithmic properties:

\begin{enumerate}
    \item\label{it:alg-tract} the groups in $\calP$ have a recursively enumerable set of presentations, and have uniformly solvable conjugacy problem, the groups in $\calP'$ belong to a hereditarily algorithmically tractable class of groups, their small subgroups are finitely generated and it is possible to decide if a group in $\calP'$ coincides with the group that contains it in $\calP$,
    \item\label{it:FOP_IP} the fiber-and-orientation-preserving isomorphy (or isomorphism) problem is solvable for mapping tori in $\calP$, 
    \item\label{it:owski} the groups in $\calP'$ have congruences that effectively separate the torsion, and
    \item\label{it:MWHP} for every mapping torus $P\in \calP$,  the fibre-and-orientation preserving mixed Whitehead problem on the class of generating tuples of the subgroups $P' \in \calP'$, is solvable whenever $P'\leq P$.

\end{enumerate}

Then the conjugacy problem in $\out{G}$ for elements in class $\calO\calA$ is solvable.

\end{mainthm}

The class $\calP$ might seem impenetrable, but actually it consists of mapping-tori of malnormal quasi-convex subgroups of $G$, and if $G$ is free, it consists of mapping-tori of free groups of smaller rank.
The algorithmic properties in Theorem \ref{thmintro} could be simplified by relaxing the distinction between $\calP$ and $\calP'$, see Example \ref{eg:ULS} or the statement of Corollary \ref{cor;ung}. That said, it is also worth noting that in the case of mapping tori of automorphisms of one-ended torsion-free hyperbolic groups, although the groups in $\calP$ need not be coherent (see \cite{Rips}), the groups in $\calP'$ turn out to be virtually $\mathbb{Z}^2$. It is this important case that has guided our formulation of the main result.

At the end of this paper in Section \ref{sec;non-growing} we show the required algorithmic properties are satisfied in the following most basic situation. 

\begin{mainprop}\label{prop;intro}
  If $\calP$ consists of groups that are direct products of a finitely generated free group with $\mathbb{Z}$, and if $\calP'$ consists of finitely generated subgroups of groups in $\calP$, then the four algorithmic properties given in Theorem \ref{thmintro} are satisfied for $\calP$ and $\calP'$. 
\end{mainprop}

As we'll see, even this seemingly trivial case has applications.

\subsubsection*{Applications}
The following are examples of groups and classes of outer automorphisms in which it is possible to decide conjugacy using our main result and Proposition \ref{prop;intro}. Some of these examples are already well-known whereas others are new even for free groups.

\begin{mainexample} (Atoroidal automorphisms, well known).
  If $G$ is a free group, and  $\alpha$ is atoroidal, then by \cite{Brinkmann}, all non-trivial elements of $G$ have exponential growth, the only maximal polynomial growth sub-mapping-tori of $\alpha$ is that of the trivial subgroup. Mapping tori with trivial fiber satisfy the four algorithmic properties of Theorem \ref{thmintro}, which solves the conjugacy problem for the class $\calO\calA$ of atoroidal automorphisms. This case was the object of \cite{D_tams}.
\end{mainexample}

\begin{mainexample} (Exponential-or-pointwise-inner automorphisms of free groups, new).
    Recall that  in free groups (and torsion free hyperbolic groups), any pointwise inner automorphism is inner \cite{BV2011}.  If $\alpha$ is an automorphism of free group that is pointwise inner on its polynomially growing elements, then its maximal polynomial growth sub-mapping-tori are direct products of f.g.-free groups with $\mathbb{Z}$.  By Proposition \ref{prop;intro} the four required algorithmic properties are satisfied in this case.
    
    This applies in particular if $G$ is a free group, with a free factor system $G=A_1*A_2*\dots*A_k*F$ and  $\alpha$ has the property that $\alpha^k(g)$ is conjugate to $g$  (for $k\geq 1$) if and only if $g$ is conjugate into one of the $A_i$.  In fact, such $\alpha$ is easily seen to be  relatively atoroidal (i.e. no elements that are hyperbolic w.r.t. the free factor systems are mapped via a power of $\alpha$ to their conjugates) and has no twin pair of subgroups (no pairs of conjugates of the $A_i$ are sent on conjugates by a same conjugator), so \cite{DLi} can be applied to identify the polynomially growing subgroups.
    
\end{mainexample}

\begin{corintro}\label{cor:corintro}
    The conjugacy problem for outer automorphisms of free groups that are pointwise inner on their polynomially growing elements, is solvable. 
\end{corintro}

\begin{mainexample}(Almost toral relatively hyperbolic outer automorphisms of free groups, new).  Brinkmann's theorem \cite{Brinkmann} says that the  automorphisms of a free group that are atoroidal are exactly those producing a word-hyperbolic mapping torus.  Consider now those producing mapping tori that are almost-toral relatively hyperbolic: those are those that are exponentially growing on all elements except on a collection of invariant (up to conjugacy) cyclic subgroups. Their polynomial submapping tori are either isomorphic to $\mathbb{Z}^2$ or to $\bbZ\rtimes\bbZ$.    Since the four algorithmic properties given in Theorem \ref{thmintro} are solvable for the class that consists of $\bbZ\rtimes\bbZ$, the fundamental group of the Klein bottle and its subgroups (see \cite{DFMT}), one can apply   Theorem \ref{thmintro}  to solve the conjugacy problem within the class of so-called almost toral relatively hyperbolic outer automorphisms of $F_n$.  This class contains (but is much larger than) the class of automorphisms arising from pseudo-Anosov mapping classes of surfaces with boundary, for which the conjugacy problem is well known.

\end{mainexample}

The next example is studied in a subsequent work.
\begin{mainexample} (Polynomial part that is unipotent of linear growth)\label{eg:ULS}
In \cite{DT23} the authors show that if the class $\calP$ consists of mapping tori of unipotent linearly growing free group automorphisms, then $\calP$ itself is algorithmically tractable, has effective congruences separating torsion, and has solvable fibre and orientation preserving mixed Whitehead problem for arbitrary finite tuples of tuples. This readily implies that the algorithmic properties required in Theorem \ref{thmintro} are satisfied and therefore that if  $G=F_n$, the free group of rank $n$, and $\calO\calA$ is the class of mixed growth outer automorphisms whose polynomial parts are unipotent linear, then the conjugacy problem is solvable within $\calO\calA$. This result is the base case ($n=1$) of a program to solve the conjugacy problem among automorphisms in $\out{F_n}$ with unipotent polynomially growing part.

\end{mainexample}

We now discuss examples that are not about automorphisms of free groups.

\begin{mainexample}\label{eg:unip-mapping-class} (Some mapping classes of surfaces,  well known).
  If $G$ is a closed orientable surface group $\Sigma$ of genus $g\geq 2$, and $\alpha$ is an automorphism of $G$, induced by a mapping class $\phi$ on $\Sigma$. Assume $\phi$ is of infinite order. If it is pseudo-Anosov, then it only the trivial element is polynomially growing and the assumptions of Theorem \ref{thmintro} are trivially satisfied.     
  
  If it is not pseudo-Anosov, it is reducible: it preserves a maximal disjoint collection of closed simple curves, and some power of $\phi$ preserves the associated subsurfaces. On some of these subsurfaces it induces a pseudo-Anosov mapping class, on the other ones it induces a finite order outer automorphism. The polynomial growth submapping tori are given by, first,  simple curves between two subsurfaces on which $\phi$ is pseudo-Anosov, and second, by the components of the complement of these pseudo-Anosov subsurfaces  (i.e. subsurfaces obtained by glueing along their common boundary curves the elementary subsurfaces on which $\phi$ has finite order). These latter ones are the Seifert-fibered pieces of the classical 3-manifold JSJ decomposition of the mapping torus of $\Sigma$ by $\phi$. On the group theoretical level, these are exactly the polynomial growth sub mapping-tori that are not virtually $\bbZ^2$.

  For the sake of the example, assume that the collection of disjoint simple closed curves is not properly permuted by $\phi$ and that  the finite order mapping classes induced by $\phi$ on subsurfaces are trivial, then we are exactly in the case of Proposition \ref{prop;intro} since surfaces with boundary have free fundamental group.
\end{mainexample}

\begin{mainexample} (Certain automorphisms of residually finite one-ended torsion-free hyperbolic groups, new).
    Let $G$ be a one-ended residually finite torsion-free hyperbolic group and let $\alpha$ be an automorphism that is pointwise inner on its polynomially growing part. Then by \cite{Lev05} it is possible to refine the JSJ decomposition of $G$ in the QH subgroups so that certain vertex groups of the refinement are $\pi_1$-images of compact subsurfaces $\Sigma_1,\ldots,\Sigma_p$ of the surfaces underlying the QH groups of the JSJ decomposition of $G$ on which $\alpha$ induces pseudo-Anosov mapping classes. We call such vertex groups \emph{active}. $\alpha$ restricts to a trivial outer automorphism on the subgroups of $G$ that come from the subgraphs of groups coming from the connected components of the complement of the active vertices. These (fundamental groups of) subgraphs of groups $G_i$ are the maximal  polynomially growing subgroups and the mapping tori are seen to be direct products of  the form $G_i\times \bbZ$. The edge groups of JSJ decomposition of the mapping torus $G\rtimes \langle t \rangle$ are copies of $\bbZ^2$ that have one generator in the fiber $G$ and another generator in the coset $tG$. Since edge groups are always $\bbZ^2$ and the $G_i$ are all torsion-free hyperbolic with computable presentations algorithmic properties \ref{it:alg-tract}, \ref{it:owski}, follow immediately from known facts about $\bbZ^2$, hyperbolic groups, and direct products with $\bbZ$. Within every maximal trivial growth mapping torus $P \in \calP$ the image of an edge group can be given by a  generating set of the form $\langle{g_e,t_P}\rangle$ where $t_P$ is central in $P$ and picked out from $\{t_p^{\pm 1}\}$ by the orientation and $g_e$ lies in the fiber $G_i$. In particular algorithmic property \ref{it:MWHP} follows from the solution to the isomorphism problem for hyperbolic groups relative to cyclic groups (see \cite{DGr,DG_gafa_2010}) or directly from \cite[Theorem 6.1]{BV2011}, which also implies that property \ref{it:FOP_IP} is satisfied. Thus we have:
\end{mainexample}

\begin{corintro}\label{cor:hyp-triv-ip}
    The conjugacy problem for outer automorphisms of one-ended  torsion-free hyperbolic groups that are pointwise inner on their polynomially growing elements, is solvable. 
\end{corintro}

    Although, given \cite{Lev05}, this example seems tame, the machinery we are using, particularly the mixed Whitehead problem on generating tuples of edge groups, is exactly what is needed to handle the case where two outer automorphisms $[\alpha],[\beta]$ that are pointwise inner on their polynomially growing parts are conjugate by an automorphism that induces a non-trivial automorphism of the graph underlying the JSJ decomposition. The next is an example involving a many-ended non-free hyperbolic group.

\begin{mainexample} (Free products of free and surface groups, new, see Figure \ref{fig:auto-eg})
  If $G$ is a free product of free groups and closed surface groups, and $\alpha$ is an automorphism of $G$ that restricts to a pseudo-anosov an automorphisms of the closed surface free factors and all fixed subgroups are cyclic. Once again, the four algorithmic properties of our main result are solvable in the class of maximal polynomial growth sub-mapping-tori of $\alpha$ since these will be virtually $\bbZ^2$. $\alpha$ could also restrict to automorphisms such as the ones given in Example \ref{eg:unip-mapping-class}, in which case Proposition \ref{prop;intro} also applies.
\end{mainexample}

\begin{figure}
    \centering
    \includegraphics[width=0.5\textwidth]{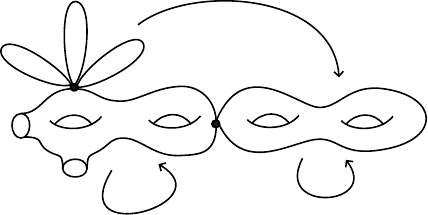}
    \caption{An automorphism carried by a homotopy equivalence that restricts to pseudo-Anosov maps on surfaces with or without boundary. A remaining free factor is not preserved and all its elements grow exponentially.}
    \label{fig:auto-eg}
\end{figure}

\subsubsection*{On the proof}

The conjugacy problem for outer automorphisms of a group $G$ is equivalent to the fibre-and-orientation-preserving isomorphy problem for mapping tori of $G$ given by the considered automorphisms. 

Our theorem will be obtained via the relative hyperbolicity of the semidirect products $G\rtimes \mathbb{Z}$. This geometric feature, suggested by Gautero and Lustig \cite{GL}, and proved by Ghosh \cite{Ghosh},  Li and the first author \cite{DLi}, lets us apply the body of work on the isomorphy (or isomorphism) problem for relatively hyperbolic groups \cite{DT_invent_2019, DG_Duke}. However, the constraint of preserving the fiber and the orientation is important and difficult.  

The theory of Dehn fillings of relatively hyperbolic groups will be a crucial ingredient in two steps of the argument. 

It first appears as co-finite Dehn fillings of the peripheral edge groups of a JSJ decomposition, which allow us to compute the finite groups of outer-automorphisms that may twist the glueing maps in this JSJ decomposition. These are the Dehn fillings that appear in \cite{DT_invent_2019, DG_Duke}. Unfortunately, the fibered structure of our mapping tori does not persist in these Dehn fillings. In particular this technique alone is insufficient  to solve the fiber and orientation preserving isomorphism problem.

A crucial innovation in this paper is the use of a second sequence of Dehn fillings that only fill-in subgroups of the fiber in such a way that the parabolic subgroups of the mapping torus are mapped to virtually cyclic groups. The quotient is therefore word hyperbolic and furthermore it is fibered. Using these co-virtually-cyclic Dehn fillings, and arguments involving the small modular group of the quotient, it is then possible to decide the existence of fiber and preserving isomorphisms between the the given mapping tori.

{\it Acknowledgments.} We would like to thank the referees for useful comments. We also thank Armando Martino and Stefano Francaviglia for encouraging discussions around the methods and the applications. The second named author is supported by an NSERC Discovery grant.

\section{Preliminaries}

 \subsection{Graphs, graphs of groups, and  their fundamental groups}\label{sec;vocab}

A graph $X= (\verts X, \edges X, \iota, \tau, -)$ is  a set of vertices
$\verts X$, a set of
oriented edges $ \edges X$, endowed with two maps $\iota:\edges X \to
\verts X $, $\tau:
\edges X \to \verts X $ and a fixed-point free involution $-: \edges X\to \edges X$ satisfying
${\iota(\bar e)} =
\tau(e)$.
We will freely use the geometric realisation of a graph, and the vocabulary from it.

 A graph of groups $\bbX$ is 
 given by  a graph $X$, 
  a group $G_v$ for each vertex $v$,
  a group $G_e = G_{\bar e}$ for each unoriented edge $\{e, \bar e\}$, 
  and an injective homomorphism $t_e  : G_e \to
   G_{\tau(e)}$ for each oriented edge $e$ of   terminal 
 vertex
   $\tau(e)$.

   The Bass group of $\bbX$ is the group generated by the collection
   of all vertex groups $G_v, v\in X^{(0)}$, and the collection of all
   edges $e\in X^{(1)}$, subject to the relations that, for all $e$,
   $e\bar e = 1$ and that, for all $e$ and all $g\in G_e$,
   $\bar e t_{\bar e}(g) e = t_e(g) $.  The generators corresponding
   to edges are called Bass generators.

     Given $v_0$, a vertex of $X$, the \define{fundamental group}
     $\pi_1(\bbX, v_0)$ is the subgroup of the Bass group whose
     elements are given by a word $g_0e_1g_1e_2g_2\dots g_ne_ng_{n+1}$ such that $g_0, g_{n+1}\in G_{v_0}$, $\iota(e_1) = v_0= \tau (e_{n})$, and for all $i>0$, with $i\leq n$, $g_i\in G_{\tau(e_{i})}$ and $\tau(e_{i})=\iota(e_{i+1})$.

    Given  $\tau_0$ a spanning subtree of
    $X$, the
    \define{fundamental group}  $\pi_1(\bbX, \tau_0)$ is the quotient of the Bass
    group obtained by identify all edges in $\tau_0$ to the
    identity.
    
     An important theorem of the theory of Bass-Serre
    decompositions is that this quotient map induces an isomorphism
    from $\pi_1(\bbX, v_0)$ to $\pi_1(\bbX, \tau_0)$.

    \subsection{Automorphisms of graphs of groups}\label{sec;autom_gog}

    In a group, we denote conjugation as follows
    $a^g = g^\mo a g = \ad{g}(a) $. An automorphism $\Phi$ of a graph
    of groups $\bbX$ is a tuple consisting of an autormorphism
    $\phi_X$ of the underlying graph $X$, an isomorphism
    $\phi_v : G_v\to G_{\phi_X(v)}$ for every vertex $v$, an isomorphism
    $\phi_e : G_e\to G_{\phi_X(e)}$ for every edge $e$, with
    $\phi_{\bar e} = \phi_e$, and elements
    $\gamma_e\in G_{\phi_X(\tau(e))}$ for every edge $e$, that satisfy the
    Bass diagram:
   
\begin{equation}\tag{Bass  Diagram} 
\begin{array}{cccccl}
  G_e & \stackrel{t_e}{\longrightarrow} & G_{\tau(e)}  &
                                                   \stackrel{\phi_{\tau(e)}}{\longrightarrow}
  & G_{\phi_X(\tau(e))}  & \\
&&&&& \\
 ||  & & & &  \Big\uparrow   \ad{\gamma_e} \\
&&&&&\\
G_e &    \stackrel{\phi_{e}}{\longrightarrow}   & G_e &
                                                        \stackrel{t_{\phi_X(e)}}{\longrightarrow}
  &  G_{\phi_X(\tau(e))}  & \end{array}                                                
  \end{equation}
    
 which can be also be written as an equation:

 \[ \phi_{\tau(e)} \circ t_e = {\rm ad}_{\gamma_e}\circ  t_{\phi_X(e)} \circ \phi_e.\]

 One can check that an automorphism of the graph of groups $\bbX$
 extends naturally to an automorphism of the Bass group, by sending
 the generator $e$ to $\gamma_{\bar e}^{-1} \phi_X(e) \gamma_e$.
 While this does not necessarily preserves the vertex $v_0$ or the
 spanning subtree $\tau_0$, it induces an isomorphism between
 $\pi_1(\bbX, v_0)$ and $\pi_1(\bbX, \phi_X(v_0))$.  See \cite[\S
 2.3]{Bass}, \cite[Lemmas 2.20-2.22]{DG_gafa_2010}.

The composition rule is as follows (where we replaced the notations $\phi_X, \phi_X'$ by $f, f'$ for readability) \begin{multline*} (f,
  (\phi_v)_v, (\phi_e)_e, (\gamma_e)_e) \circ (f', (\phi'_v)_v,
  (\phi'_e)_e, (\gamma'_e)_e)  = \\  ( f\circ f',  \, (\phi_{f'(v)} \circ
  \phi_v)_v, \,  (\phi_{f'(e)} \circ \phi_e )_e, \, 
  (\phi_{f'(\tau(e))}(\gamma'_e)) \gamma_{f'(e)}).  \end{multline*}

The group of all automorphisms of the form
$(\phi_X, (\phi_v)_v, (\phi_e)_e, (\gamma_e)_e)$ satisfying the Bass
diagram is denoted $\delta \Aut (\bbX)$, and maps naturally to a well
defined subgroup of $\out{ \pi_1(\bbX, v)}$.

The subgroup  $\delta_0 \Aut (\bbX)$ consists of automorphisms of the form \[(Id_X, (\phi_v)_v, (\phi_e)_e, (\gamma_e)_e).\]   If $\Phi_1, \Phi_2$ define the same underlying graph isomorphism, then $\Phi_1^{-1} \Phi_2 \in \delta_0 \aut{\bbX}$. In particular,  $\delta_0 \aut{\bbX}$ is of finite index in  $\delta \Aut (\bbX)$.

\begin{prop}\label{prop;coset_computation}
  Consider a finite graph of groups $\bbX$, whose vertex groups lie
  in a class of groups for which the isomorphism problem and the mixed
  Whitehead problem are solvable. Then there is an algorithm that
  computes a complete set of coset representatives of
  $\delta_0 \Aut (\bbX)$ in $\delta \Aut (\bbX)$.
\end{prop}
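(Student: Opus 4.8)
The plan is to decide which automorphisms of the finite underlying graph $X$ are induced by elements of $\delta Aut(\bbX)$, and to produce a witness for each. The assignment $q\colon\delta Aut(\bbX)\to\Aut X$, $(\phi_X,(\phi_v)_v,(\phi_e)_e,(\gamma_e)_e)\mapsto\phi_X$, is a group homomorphism — this is read off from the composition rule recalled above — whose kernel is exactly $\delta_0 Aut(\bbX)$. Hence the cosets of $\delta_0 Aut(\bbX)$ in $\delta Aut(\bbX)$ are in bijection with the image $S:=q(\delta Aut(\bbX))\le\Aut X$, and a complete set of coset representatives is obtained by selecting, for each $\sigma\in S$, one element of $\delta Aut(\bbX)$ mapping to it. Since $X$ is finite, $\Aut X$ is a finite group whose elements we can list, so it suffices to give an algorithm that, on input $\sigma\in\Aut X$, decides whether $\sigma\in S$ — call such a $\sigma$ \emph{realisable} — and, in that case, returns an element of $q^{-1}(\sigma)$.

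Fix $\sigma\in\Aut X$. Unwinding the Bass diagram, realising $\sigma$ amounts to choosing isomorphisms $\phi_v\colon G_v\to G_{\sigma(v)}$ and $\phi_e\colon G_e\to G_{\sigma(e)}$ with $\phi_{\bar e}=\phi_e$, together with elements $\gamma_e\in G_{\sigma(t(e))}$, subject to $i_{\sigma(e)}\circ\phi_e=\ad{\gamma_e}^{\mo}\circ\phi_{t(e)}\circ i_e$ for every oriented edge $e$. Writing this identity for both orientations of each unoriented edge and eliminating the $\phi_e$, one sees that $\sigma$ is realisable exactly when there are isomorphisms $\phi_v\colon G_v\to G_{\sigma(v)}$ and elements $\gamma_e$ such that, for every oriented edge $e$ with $t(e)=v$, $\gamma_e\,\phi_v(i_e(G_e))\,\gamma_e^{\mo}=i_{\sigma(e)}(G_{\sigma(e)})$ — that is, $\phi_v$ sends the peripheral subgroup $i_e(G_e)$ into the conjugacy class of $i_{\sigma(e)}(G_{\sigma(e)})$ — and such that, for every unoriented edge $\{e,\bar e\}$, the two isomorphisms $G_e\to G_{\sigma(e)}$ it induces from its two endpoints coincide. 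I would first use the isomorphism problem to decide, for each vertex, whether $G_v\cong G_{\sigma(v)}$ — if some pair is non-isomorphic, $\sigma\notin S$ — and to fix one isomorphism $\psi_v$ in each case; the sought $\phi_v$ is then $\beta_v\circ\psi_v$ for some $\beta_v\in\Aut{G_{\sigma(v)}}$. The peripheral condition at $v$ becomes: $\beta_v$ carries the tuple of conjugacy classes of finitely generated subgroups $\bigl(\psi_v(i_e(G_e))\bigr)_{e\,:\,t(e)=v}$ to the tuple $\bigl(i_{\sigma(e)}(G_{\sigma(e)})\bigr)_{e\,:\,t(e)=v}$; deciding this and producing such a $\beta_v$ is an instance of the mixed Whitehead problem in $G_{\sigma(v)}$, and when solvable it determines $\phi_v$ uniquely up to left multiplication by the relative automorphism group of $G_{\sigma(v)}$ (the subgroup of $\Aut{G_{\sigma(v)}}$ fixing each of these conjugacy classes). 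Finally, for each unoriented edge the remaining freedom in $\gamma_e$ and $\gamma_{\bar e}$ is a coset of the normaliser of the relevant peripheral subgroup, and the edge-gluing condition translates into an intersection condition between two cosets of computable subgroups of $\Aut{G_{\sigma(e)}}$; solving it fixes $\gamma_e$, $\gamma_{\bar e}$, and $\phi_e$.

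The point that requires care — the main obstacle — is that these choices are not independent over the graph: the residual freedom in $\phi_v$ (multiplication by a relative automorphism of $G_{\sigma(v)}$) restricts, simultaneously and in a generally coupled way, to all edge groups incident to $v$, so one cannot solve the edge-gluing conditions one edge at a time. I would organise the verification by fixing a spanning tree $\tau_0$ of $X$, propagating the admissible sets of vertex isomorphisms outward from a root along the tree edges and then imposing the closing conditions at the remaining edges — at each stage keeping the admissible set at a vertex as a finite union of (double) cosets modulo its relative automorphism group, so that the process stays effective and terminates. It is here that solvability of the mixed Whitehead problem is used in full, since it underlies our ability to describe and compute within these relative automorphism groups and their images in the edge automorphism groups. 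When the vertex groups are rigid or elementary — the case relevant to the applications — this coupling degenerates and the admissible sets are governed by twists along edges, so the bookkeeping simplifies considerably. In all cases, a successful run assembles the chosen $\phi_v$, $\phi_e$, $\gamma_e$ into an element of $q^{-1}(\sigma)$, and running over all $\sigma\in\Aut X$ yields the desired complete set of coset representatives.
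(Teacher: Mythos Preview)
Your overall strategy matches the paper's: enumerate the finitely many graph automorphisms $\sigma\in\Aut X$, use the isomorphism problem to discard those for which some $G_v\not\cong G_{\sigma(v)}$ and to fix initial isomorphisms $\psi_v$, and then invoke the mixed Whitehead problem to decide which of the surviving $\sigma$ extend to graph-of-groups automorphisms.

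The gap is in how you handle the edge-gluing. You correctly identify that the choices at different vertices are coupled through the edges, and you propose to resolve this by propagating ``admissible sets'' along a spanning tree, maintained as finite unions of double cosets of relative automorphism groups, and then closing up over the remaining edges. But the hypothesis is only that the mixed Whitehead \emph{problem} is solvable --- an orbit-membership decision procedure for tuples of conjugacy classes of tuples. That alone does not furnish generators for the relative automorphism group $\Aut{G_{\sigma(v)},\calP}$, nor the image of that group in $\Aut{G_{\sigma(e)}}$, nor algorithms to intersect or enumerate the (double) cosets you invoke. Your sentence ``solvability of the mixed Whitehead problem\ldots underlies our ability to describe and compute within these relative automorphism groups'' asserts precisely what would need to be proved, and there is no reason the admissible sets stay finitely describable as you move through the tree.

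The paper sidesteps this entirely by quoting \cite[Proposition~4.4]{DT_invent_2019}: once vertex isomorphisms $\psi_v$ are fixed, a graph-of-groups isomorphism extending $\sigma$ exists if and only if there is an \emph{extension adjustment} in the sense of \cite[Definition~4.3]{DT_invent_2019}, and the existence of such an adjustment is a vertex-by-vertex condition decidable by one instance of the mixed Whitehead problem per vertex. The mechanism, roughly, is to work with \emph{marked} peripheral structures --- markings pushed from fixed generating tuples of the abstract edge groups through both attaching maps --- so that a solution to the marked MWP at each endpoint forces the two induced edge isomorphisms to coincide, dissolving the coupling you are fighting. Your argument would be repaired either by citing that proposition or by proving the analogous decoupling statement directly; as written, the spanning-tree propagation is not justified by the stated hypotheses.
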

\begin{proof}
  Let $\phi_X:X\to X$ be a graph isomorphism. We wish to know whether it
  can be extended into a graph of groups automorphism in
  $\delta \aut{\bbX}$.  By hypothesis, we may determine whether for
  each vertex $G_v$ is isomorphic to $G_{\phi_X(v)}$, and if so, find such
  an isomorphism.  If for one vertex $v$, the vertex groups $G_v$ and
  $G_{\phi_X(v)}$ are not isomorphic, then $\phi_X$ cannot be extended into an
  automorphism of graph of groups. We can therefore produce the finite
  list of these graph automorphisms.

  \cite[Proposition 4.4]{DT_invent_2019} then ensures that the graphs
  of groups are isomorphic if and only if there exists an extension
  adjustment (see \cite[Definition 4.3]{DT_invent_2019}).
 The solution to the mixed Whitehead problem for vertex groups ensures
 that one can decide whether  such an adjustment exists.
\end{proof}

Recall that, following \cite[\S 1.2]{D_tams}, the \define{small
  modular group} of a graph of groups $\bbX$ is the subgroup of $\delta\aut{\mathbb{X}}$whose elements are  of the form
$(Id_X, (\ad {\gamma_v}), (Id_e), \gamma_e)$, for $\gamma_v \in
G_v$. We note that the Bass diagram imposes that
$\gamma_v\gamma_e^{-1} \in Z_{G_{\tau(e)}} (t_e(G_e))$ for all edge $e$.
Recall also that it is generated by Dehn twists over edges of the
graph of groups $\bbX$.

\section{The main result}

\subsection{General statement}\label{sec;as_st}

We now need to detail our vocabulary on mapping tori. Let $F$ be a
group, $\alpha$ an automorphism of $F$, and
$\torus \alpha = F \rtimes_\alpha \langle t_\alpha\rangle$ its mapping
torus.  This means that in $\torus \alpha$, if $a\in F$, then
$\ad{t_\alpha}(a) = \alpha(a)$. We will often write $t$ for $t_\alpha$
if the context allows the abuse. 

The group $\torus \alpha$ is equipped with a fibre and an orientation:
we call $F$ the \define{fibre} of the semi-direct product  $\torus \alpha$, and 
we say that the coset  $F t $ determines the positive \define{orientation} of  the semi-direct
product.

Consider now $\torus \alpha$ and $\torus \beta$ the mapping tori of
the same group $F$, by automorphisms $\alpha$ and $\beta$. 
If $\psi: \torus \alpha \to \torus \beta$ is an isomorphism, we say
that it is \define{fibre preserving} if $\psi(F) = F$, and in this case we say
that it is \define{orientation preserving} if $\psi(t_\alpha) \in F 
t_\beta $.

For  a subgroup $H$ of $F$ whose conjugacy class is preserved by
some power of $\alpha$,  the \define{sub-mapping torus} of $H$ is obtained as
follows. Let $k>0$ the smallest integer such that there exists $a\in
F$ for which  $\alpha^k(H) = \ad{a}(H)$. The  sub-mapping torus of $H$
is the subgroup $\langle H, t_\alpha^ka^{-1}   \rangle$,  of $\torus
\alpha$. It does not depend on the choice of $a$, and is isomorphic to $H\rtimes_{(\ad{a^{-1}} \circ \alpha^k)}
\langle t_\alpha^ka^{-1}\rangle$.

A group is small if it does not contain a non-abelian free subgroup.
We may now state our main result. In its statement, $F$ is not necessary a free group ($F$
stands for Fibre).  We refer to the introduction for the definitions of the algorithmic problems. 

\begin{thm} \label{thm;IP_fop}
	
  Let $F$ be a finitely generated, torsion-free group, and let $\calO\calA$ be a collection of outer automorphisms of $F$. Consider the
  class $\calT$ of mapping tori $\torus \alpha = F\rtimes_\alpha \bbZ$ of $F$, over automorphisms in the classes of $[\alpha] \in \calO\calA$. Assume that each $\torus\alpha$  is relatively
  hyperbolic with respect to a collection $\calP_\alpha$ of residually finite sub-mapping tori. Let $\calP'_\alpha$ be the class of all finitely generated subgroups of groups in $\calP_\alpha$, that arise as edge groups in the JSJ decompositions of groups in $\calT$.  Assume that $\calP' = \bigcup_{[\alpha]\in \calO\calA}  \calP'_\alpha$ and $\calP = \bigcup_{[\alpha]\in \calO\calA} \calP_\alpha$ satisfy: 
  \begin{enumerate}
  \item\label{it:al-tract} the groups in $\calP$   have a recursively enumerable set of presentations   and have uniformly solvable conjugacy problem, the groups in $\calP'$ belong to an hereditarily
    algorithmically tractable class of groups, their small subgroups are finitely generated, and it is possible to decide if a group in $\calP'$ coincides with an overgroup in $\calP$,
  \item the fibre and orientation preserving isomorphism problem is
    solvable for the class $\calP$,
  \item the groups in $\calP'$ have congruences that separate the torsion effectively,
  \item 
    in the class of mapping tori in $\calP$, for each group in $\calP_\alpha$ the fibre-and-orientation preserving mixed Whitehead problem on the family of generating tuples of its subgroups in $\calP'_\alpha$ is solvable.   
  \end{enumerate}
  
  Then in the class $\calT$, the fibre and orientation preserving
  isomorphism problem is solvable, and in the collection $\calO\calA$, the conjugacy problem in $\out{F}$ is solvable.

\end{thm}

\begin{remark}
 Theorem \ref{thmintro} of the introduction follows from this and the relative hyperbolicity proved in \cite{DLi, DK}.
\end{remark}

\begin{remark}\label{rem-z} The assumptions such as of belonging
to an hereditarily algorithmically tractable class of groups in item \ref{it:al-tract}. of Theorem \ref{thm;IP_fop} allow the computation of the canonical JSJ splitting, which includes finding presentations of the vertex. This assumption can be substituted with another way of computing the canonical JSJ splitting, without changing the argument. 
\end{remark}

We call a subgroup of $\torus \alpha$ \define{elementary} if it is either
cyclic or a subgroup of a group in $\calP_\alpha$. For the rest of the
paper we consider $F$, $\torus \alpha$ and $\torus \beta$ as in the
statement of Theorem \ref{thm;IP_fop}, and aim to determine whether
they are isomorphic via a fibre and orientation preserving
isomorphism.  We will sometimes write $t$
for either $t_\alpha$ or $t_\beta$, when the context is clear.

\subsection{JSJ decompositions}

First we record that no splitting of a mapping torus $\torus \alpha$
of a finitely generated, torsion-free group $F$ has a vertex group
that is (QH) -- quadratically hanging (or hanging fuchsian).

\begin{prop}[{\cite[Proposition 2.11]{D_tams}}] \label{prop;noQH} If
  $\torus \alpha$ acts co-finitely on a tree $T$ 
  and if $G_v$ is the stabilizer of $v$, and if $\calP_v$
  is the peripheral structure on $G_v$ given by the collection of
  adjacent edge stabilizers, then $(G_v, \calP_v)$ is not isomorphic
  to the fundamental group of a surface, with the peripheral structure
  of its boundary components.
  
\end{prop}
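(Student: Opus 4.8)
The plan is to argue by contradiction, assuming that some vertex group $G_v$ with its adjacent-edge peripheral structure $(G_v,\calP_v)$ is isomorphic to the fundamental group $\pi_1(\Sigma)$ of a surface $\Sigma$ with the peripheral structure given by its boundary curves. The key structural fact to exploit is that $\torus\alpha$ carries the distinguished fiber subgroup $F$ with $\torus\alpha/F\simeq\bbZ$, and this homomorphism $\torus\alpha\to\bbZ$ must interact with the tree action $T$ in a controlled way. First I would observe that since $F$ is the kernel of a surjection onto $\bbZ$, and since $T$ has cyclic edge stabilizers, the restriction of the action to $F$ still has cyclic (hence in particular small) edge stabilizers; and $G_v\cap F$ has finite index in $G_v$ (in fact index exactly the image of $G_v$ in $\bbZ$, which is either trivial or all of $\bbZ$), with a compatible peripheral structure inherited from $\calP_v$.

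The next step is to rule out the surface picture using the behaviour of the abelianization-type invariant coming from $t$. If $G_v\simeq\pi_1(\Sigma)$ for a surface with boundary, then the boundary subgroups (which correspond to the peripheral structure $\calP_v$, i.e. to the incident edge groups) generate a subgroup whose "homological coweight" in $\pi_1(\Sigma)$ is constrained: for a surface with nonempty boundary, $\pi_1(\Sigma)$ is free and the product of the boundary words is trivial (closed surface case) or the boundary curves are a basis-like family (open case). I would then push this through the map to $\bbZ$ given by $t$ restricted to $G_v$: either $G_v\subseteq F$, in which case $G_v$ being a nontrivial surface group forces $F$ to contain a surface subgroup in a way incompatible with $F$ being the fiber of a mapping torus of the relatively hyperbolic type considered (more precisely, one gets a $\bbZ^2$ or a closed-surface subgroup obstructing the relatively hyperbolic structure, or one contradicts that the sub-mapping tori in $\calP_v$ are the peripheral ones); or $G_v$ surjects onto $\bbZ$, in which case the element mapping to a generator together with $F\cap G_v$ exhibits $G_v$ itself as a mapping torus $ (F\cap G_v)\rtimes\bbZ$, and a surface group is a mapping torus of a free (or surface) group by a finite-order-up-to-inner map only in the Euclidean/Sol cases — again ruled out because those are not relatively hyperbolic relative to proper peripherals, contradicting that $(G_v,\calP_v)$ sits inside the relatively hyperbolic $\torus\alpha$ with the given $\calP_\alpha$.

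Concretely, the cleanest route is: a surface group $\pi_1(\Sigma)$ that is also a mapping torus $N\rtimes_\psi\bbZ$ of a finitely generated group $N$ must have $N$ finitely generated and normal with quotient $\bbZ$; by the classification of such surface groups (Stallings fibration / Euler characteristic count), $\Sigma$ is either the torus or the Klein bottle, so $\pi_1(\Sigma)$ is virtually $\bbZ^2$; but then $G_v$ is virtually $\bbZ^2$, hence its peripheral (boundary) subgroups are not malnormal-ish the way boundary curves must be, and more to the point $G_v$ virtually $\bbZ^2$ cannot be a vertex group in a cyclic splitting of the relatively hyperbolic group $\torus\alpha$ unless $G_v$ is itself peripheral or elementary — which contradicts $(G_v,\calP_v)$ being a nontrivial surface with boundary. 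The case $G_v\subseteq F$ is handled in parallel: then $G_v=F\cap G_v$ would be a surface group sitting in $F$; combined with Proposition \ref{prop;coset_computation}'s context and the relatively hyperbolic hypothesis on $\torus\alpha$, a hyperbolic-type surface subgroup of $F$ would have mapping torus containing it, contradicting that the peripheral pieces $\calP_\alpha$ are the polynomial ones. I would invoke Proposition 2.11 of \cite{D_tams} as the source of this dichotomy and simply cite it, since the statement here is quoted verbatim from there.

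The main obstacle I anticipate is the bookkeeping around the orientation/fiber data: one must be careful that the splitting $T$ of $\torus\alpha$ need not be invariant under the $\bbZ$-action coming from $t$, so the subgroup $F$ does not obviously act on the same tree with "the same" vertex groups; the honest move is to restrict the $\torus\alpha$-action on $T$ to $F$ and track how $\calP_v$ restricts, which requires knowing $[G_v:G_v\cap F]\in\{1,\infty\}$ and that edge groups stay cyclic. Getting that the surface structure survives the intersection with $F$ — or, in the $G_v\twoheadrightarrow\bbZ$ case, getting the Euler characteristic contradiction cleanly — is where the real work sits; everything else is either a citation to \cite{D_tams} or the standard fact that hyperbolic surface groups are not (virtually) $\bbZ^2$ and are not mapping tori of finitely generated normal subgroups with $\bbZ$ quotient.
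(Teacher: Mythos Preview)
The paper does not prove this proposition at all: it is quoted verbatim from \cite[Proposition~2.11]{D_tams} and simply cited. So there is no in-paper argument to compare against, and indeed you yourself note at the end that one should ``simply cite it.'' That said, your sketch aims at the right mechanism---the fibration $\pi:\torus\alpha\to\bbZ$ restricted to $G_v$---but the execution has two genuine gaps.

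First, relative hyperbolicity is not among the hypotheses of the proposition (it holds for \emph{any} cofinite action of $\torus\alpha$ on a tree with cyclic edge stabilisers), so your contradictions that appeal to ``the relatively hyperbolic structure'' or to ``the peripheral pieces $\calP_\alpha$ being the polynomial ones'' are not available. Second, and more seriously, your ``cleanest route'' hinges on $N=G_v\cap F$ being finitely generated, which you never justify and which is exactly where the work lies: since a QH vertex has adjacent edges, $\Sigma$ has boundary, so $\pi_1(\Sigma)$ is \emph{free}; and for a free group of rank~$\geq 2$ the kernel of a surjection to $\bbZ$ is infinitely generated. Thus your dichotomy ``torus/Klein bottle'' never arises (those are closed) and the Stallings/Euler-characteristic classification you invoke does not apply without first controlling $N$.

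The missing step (and this is essentially what \cite{D_tams} does) is: pass to the minimal $\torus\alpha$-subtree; observe that since $F\triangleleft\torus\alpha$, the minimal $F$-subtree is $\torus\alpha$-invariant, hence equals $T$; as $F$ is finitely generated and acts minimally, $T/F$ is finite; then the $F$-vertex group at $v$, namely $G_v\cap F$, is finitely generated because the edge groups are cyclic. This also forces $\pi(G_v)\neq 0$ (otherwise the $\torus\alpha$-orbit of $v$ would split into infinitely many $F$-orbits). Now $G_v\cap F$ is a finitely generated normal subgroup of infinite index in the free group $G_v$, hence trivial, so $G_v\cong\bbZ$; this contradicts the negative Euler characteristic implicit in ``QH.'' Once you insert this argument and drop the appeals to relative hyperbolicity, your outline becomes correct.
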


The given reference covers trees with cyclic edge
groups, we explain why it is sufficient.

\begin{proof} Consider the tree $\dot T$ obtained from $T$ by
collapsing all edges with non-cyclic stabilizer. Arguing by contradiction, if there exists a vertex $v$ of $T$ whose
group $(G_v, \calP_v)$ is isomorphic to the fundamental group of a
surface with the peripheral structure of its boundary components, then  its star would be unchanged in the collapse $\dot T$, and its
image in $\dot T$ would have same stabilizer. However $\dot T$ satisfies the assumptions of {\cite[Proposition 2.11]{D_tams}}, which outrules such stabilizers. This proves the Proposition. 
 \end{proof}

There is a specific $\torus \alpha$-tree that is called the $JSJ$-tree
of the relatively hyperbolic group $(\torus \alpha, \calP_\alpha)$
that is invariant by automorphisms of $\torus \alpha$. We refer the
reader to \cite{GL_JSJ}, and to \cite[Theorem 3.22]{DT_invent_2019}
for a convenient characterisation.

Recall that Bass-Serre theory dualises actions on trees and
decompositions into graphs of groups (or splittings). We call a
splitting essential if there is no valence $1$ vertex, whose group is
equal to its adjacent edge group. The following is an immediate
consequence of \cite[Theorem 3.22]{DT_invent_2019} and Proposition
\ref{prop;noQH} above, which guarantees the absence of QH vertex groups.

\begin{prop}
  The JSJ decomposition of $\torus \alpha$ is the unique essential
  splitting that is a bipartite graph of groups, with white vertices,
  and black vertices, such that black vertices are maximal elementary,
  white vertices are rigid (in the sense that they have  no further compatible elementary splitting),  
  and such that for any two
  different edges adjacent to a white vertex $w$, the image of the two
  edge groups in the vertex group $G_w$ are not conjugate in $G_w$
  into the same maximal elementary subgroup of $G_w$.
\end{prop}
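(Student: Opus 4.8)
The statement asserts that the JSJ decomposition of $\torus\alpha$, as produced by the general theory, coincides with the unique essential bipartite splitting with the stated local structure. The plan is to first invoke \cite[Theorem 3.22]{DT_invent_2019}, which provides exactly such a characterization of the canonical JSJ tree for a relatively hyperbolic group $(\torus\alpha,\calP_\alpha)$: it is the tree-of-cylinders (or an equivalent canonical object) whose decomposition is bipartite with maximal elementary black vertices and, on the white side, vertex groups that are either rigid, surface-type (QH), or some small number of exceptional cases. The only work needed is to rule out the QH/surface-type vertices so that the ``rigid'' description is exhaustive; this is precisely what Proposition \ref{prop;noQH} and the remark following it provide, since any vertex group $(G_v,\calP_v)$ of a cyclic-edge splitting of $\torus\alpha$ fails to be a surface group with its boundary peripheral structure (and collapsing non-cyclic edges reduces to the cyclic case). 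So the first step is: cite Theorem 3.22, then apply Proposition \ref{prop;noQH} to discard the surface alternative, leaving white vertices rigid, meaning non-elementary with $\out{G_v,\calP_v}$ finite.

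Second, I would verify that the last clause --- that the images in $G_w$ of two distinct edge groups at a white vertex $w$ are never conjugate into a common maximal elementary subgroup --- is built into the canonical JSJ: this is the standard reduction/folding normalization, namely that the JSJ tree is reduced and that edges incident to a rigid vertex are ``non-redundant'', i.e.\ their stabilizers occupy distinct maximal elementary subgroups up to conjugacy. If two such edge groups were conjugate into one maximal elementary subgroup of $G_w$, one could fold or collapse to obtain a proper refinement/simplification contradicting minimality of the canonical decomposition; alternatively this is part of the explicit description in Theorem 3.22. I would phrase this either as a direct quotation of that description or as a short folding argument.

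Third, for uniqueness: the JSJ decomposition is by construction invariant under $\aut{\torus\alpha}$ and canonical, so it suffices to argue that the listed properties pin down the splitting. Here the argument is that any essential bipartite splitting satisfying the stated conditions is ``universally elliptic'' over elementary subgroups and is maximal among such (because rigidity of white vertices and maximality of black vertices prevent any further refinement), hence is \emph{the} JSJ by the usual characterization of JSJ as the maximal universally elliptic splitting; the non-conjugacy condition on edge groups at white vertices guarantees there is no collapse making it non-essential or merging black vertices. I would assemble this from the characterization in \cite{GL_JSJ} together with Theorem 3.22.

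\textbf{Main obstacle.}
The substantive point is not the JSJ machinery itself --- that is entirely imported --- but checking that \emph{all} exceptional vertex types in the general Theorem 3.22 are excluded in the mapping-torus setting. The QH case is handled by Proposition \ref{prop;noQH}, but one must be careful that Theorem 3.22's list of non-rigid, non-elementary possibilities contains nothing else (e.g.\ no ``flexible'' non-QH type), and that the collapsing-of-non-cyclic-edges trick in the remark after Proposition \ref{prop;noQH} genuinely applies to whatever tree Theorem 3.22 outputs (its edge groups being elementary but possibly not all cyclic). Getting the bookkeeping of which vertices survive the collapse, and confirming their stabilizers are unchanged, is the place where care is required; everything else is a matter of correctly quoting the cited characterizations.
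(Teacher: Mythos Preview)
Your proposal is correct and follows essentially the same approach as the paper: the proposition is stated there as an immediate consequence of \cite[Theorem 3.22]{DT_invent_2019} together with Proposition \ref{prop;noQH}, which eliminates the QH vertex alternative. Your plan simply unpacks this in more detail (verifying the edge-group non-conjugacy clause and uniqueness explicitly), but the route is the same.
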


\begin{prop} Assume that the class of peripheral subgroups in
  $\calP_\alpha$ and edge groups in $\calP_\alpha'$ satisfy the requirements of item \ref{it:al-tract} of Theorem \ref{thm;IP_fop}.  Then, there exists an algorithm that, given $\torus \alpha$, computes the JSJ splitting of $\torus \alpha$.
\end{prop}

The algorithm is actually as explicit as the given algorithms in the
assumptions. The proposition follows from \cite[Theorem
3.26]{DT_invent_2019} after observing that $\torus \alpha$ is always
one-ended and torsion-free and that full hereditary algorithmic tractability is only needed for the class of edge groups $\calP_\alpha'$.

\subsection{On white vertices: Congruences and Dehn fillings for lists}

In the following, the graphs of groups $\bbJ_\alpha$, $\bbJ_\beta$ are
the JSJ decompositions of $\torus \alpha, \torus \beta$
respectively. Let $J_\alpha$ and $J_\beta$ the underlying graphs of
these graph of groups.  We assume that we are given an isomorphism of
graphs $\phi_J: J_\alpha \to J_\beta$.

\subsubsection{Peripheral structures,  orbits of markings}

 All our tuples will be ordered (and finite), while sets are not ordered. Let $G$ be a group. A \define{peripheral structure on $G$} is a finite set of
conjugacy classes of subgroups.  An \define{ordered peripheral structure} is a tuple of conjugacy classes of
subgroups. A \define{marked
  peripheral structure} is a finite set of conjugacy classes of
 tuples of elements, while a \define{marked ordered peripheral structure} is a tuple of conjugacy classes of tuples of elements.  For each marked structure, there is an \define{associated unmarked} structure, by taking the  subgroups generated by the tuples of elements.

We sometimes abuse terminology by
saying that a peripheral structure  is the (usually infinite) set of all subgroups whose
conjugates belong to the given finitely-many conjugacy classes.

Let $G$ be a group endowed with a peripheral structure $\calE$, and $G'$ another group.

We say that two isomorphisms $\phi, \psi: G \to G'$ \define{peripherally
coincide on a  peripheral structure} $\calE$ if for every
subgroup $E \in \calE$, there exists $h=h(E) \in G'$ such that the
restriction $\psi|_{E}$ differs from $\phi|_{E}$ by the
(post)-conjugation by $h$ in $G'$.

\subsubsection{White vertices of the JSJ decompositions and peripheral structures}

We apply this setting to the white vertices of the JSJ decompositions.

Consider a white vertex $w$ of the JSJ decomposition $\bbJ_\alpha$.
We endow its group $G_w$ with several peripheral structures.  First
there is $\calP(w)$ the relatively hyperbolic peripheral structure
coming from that of the ambient relatively hyperbolic group: it
consists of the subgroups that are intersection of $G_w$ with maximal
parabolic subgroups of $\torus \alpha$ in $\calP_\alpha$.

Second there is $\calE_\calZ(w)$ a cyclic peripheral structure coming from the cyclic edge groups that are not subgroups of groups in $\calP(w)$.  It is the collection of conjugacy classes of subgroups of $G_w$ that are
cyclic, conjugate to an adjacent edge group in $\bbJ_\alpha$, and not
subgroups of groups in $\calP(w)$. We say that this peripheral
structure is \define{transverse (to $\calP(w)$).}  

Observe that these groups intersect trivially the fibre:  by finiteness of the graph of groups, and invariance by $t_\alpha$, it  needs to be a semi-direct product of its intersection with the fibre by $\mathbb{Z}$.

Third, there is the non-transverse edge peripheral structure $\calE_\calP(w)$ consisting  of edge groups that are subgroups of groups in  $\calP(w)$: they are parabolic in the relatively hyperbolic structure. Observe that all edge groups, which are cyclic or parabolic, are in one of the peripheral structures $\calE_\calP(w)$, or  $\calE_\calZ(w)$.

These three peripheral structures are unmarked but ordered (by
choice of an order on the set of edges of $J_\alpha$ around each vertex): they correspond to an ordered
finite family of conjugacy classes of subgroups, but they do not correspond to classes of
generating sets of these subgroups.  However, $\calE_\calZ(w)$ is
naturally marked: each of its subgroups has a unique generator with
positive orientation.

We denote by $\calE_w$ the ordered concatenation of the peripheral structures
$\calE_\calZ(w)$ and $\calE_\calP(w)$. It is the peripheral structure
of adjacent edge groups of $G_w$. The group $\out{G_w, \calE_w}$ acts
on the set of markings of $\calE_\calZ(w)$ and $\calE_\calP(w)$

\subsubsection{Dehn fillings}

We need now to discuss Dehn fillings, Dehn kernels, and the
$\alpha$-certification property, given in \cite{D_MSJ}.

Given a relatively hyperbolic group $(G,\calP)$, with a choice of
conjugacy representatives  $\{P_i\}$ of the peripheral structure
$\calP$, a \define{Dehn kernel} is a normal subgroup $K$ of $G$ normally
generated by  subgroups $N_i$ of the peripheral groups $P_i$: in other words $K = \langle \langle \bigcup N_i \rangle\rangle_G$.  The quotient of $G$ by the Dehn kernel $K$ is a \define{Dehn filling of $G$.}

The Dehn filling theorem \cite{Osin_filling} states that there exists
a finite set $S$ of $G\setminus\{1\}$ such that, whenever $N_i \cap S$
is empty for each $i$, the group $G/K$ is hyperbolic relative to the
injective images of $P_i/N_i$.

If one is given a quotient $G \onto \mathbb{Z}$, we call its kernel
$F$ a fibre for $G$, and we say that a Dehn kernel is \define{in the
  fibre (or is fibered) with respect to this quotient}, if each $N_i$
is contained in $F$. We will say a Dehn
filling \define{fibered} if it is obtained by quotienting by a fibered
Dehn kernel.

When $G$ maps onto $\bbZ$ as before, with fibre $F$,  two sequences of Dehn kernels will be of interest to us. 

First, for each $m \geq0$, the kernel $K^{(m)}$  is defined by setting $N_i^{(m)}$ to be the intersection of all index $\leq m$ subgroups of $P_i$; observe that they are not fibered Dehn kernels.  We will denote $G/K^{(m)}$ by $\overline{G}^{(m)}$. 

Second, for each $m \geq0$, the kernel $K^{(m,f)}$ is defined by
setting $N_i^{(m,f)}$ to be the intersection $N_i^{(m)} \cap F$, where
$F$ is the given fibre. We will denote $G/K^{(m,f)}$ by
$\overline{G}^{(m,f)}$.  These Dehn kernels are thus fibered. Observe
that the Dehn filling theorem gives the following (recall that peripheral subgroups of finitely generated relatively hyperbolic groups are finitely generated).

\begin{prop} If $G$ is a finitely generated  relatively hyperbolic group, that is residually finite,  with
  a fibre $F$, then for every sufficiently large $m$, the group $G/
  K^{(m)}$ is  hyperbolic relative to the subgroups
  $P_i/N_i^{(m)}$, which are finite, and  the group $G/ K^{(m,f)}$ is
   hyperbolic relative to the subgroups $P_i/N_i^{(m,f)}$,
  which are virtually cyclic.

	 In both cases,  $G/ K^{(m)}$ and $G/ K^{(m,f)}$  are word-hyperbolic. Moreover, in the second case, the image of $P_i/N_i^{(m,f)}$ in $G/ K^{(m,f)}$  is a subgroup that is its own normaliser.
\end{prop}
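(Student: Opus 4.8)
The plan is to invoke the Dehn filling theorem (Osin) as the workhorse, and to check the two families of Dehn kernels satisfy its hypotheses for $m$ large enough. First I would fix conjugacy representatives $P_1,\dots, P_k$ of the peripheral structure $\calP$ of $G$, and let $\mathcal F_0 \subseteq G\setminus\{1\}$ be the finite exceptional set provided by Osin's theorem for this relatively hyperbolic structure: whenever each $N_i$ avoids $\mathcal F_0$, the quotient $G/\langle\langle \bigcup N_i\rangle\rangle$ is hyperbolic relative to the images of $P_i/N_i$. Writing $\mathcal F_0^{(i)}$ for the (finite) set of elements of $\mathcal F_0$ that are conjugate into $P_i$, one picks, for each such element, a finite-index subgroup of $P_i$ missing it — this uses residual finiteness of $G$ (hence of $P_i$), which is exactly where that hypothesis enters. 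Taking $m$ larger than the maximum index needed over all $i$ and all offending elements, one gets that $N_i^{(m)} = \bigcap\{H \le P_i : [P_i:H]\le m\}$ already avoids $\mathcal F_0$; since $N_i^{(m,f)} = N_i^{(m)}\cap F \subseteq N_i^{(m)}$, the fibered kernels avoid $\mathcal F_0$ as well. So for all such $m$, both $\overline G^{(m)}$ and $\overline G^{(m,f)}$ are hyperbolic relative to the images of $P_i/N_i^{(m)}$ and $P_i/N_i^{(m,f)}$ respectively.

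Next I would identify the peripheral quotients. For $\overline G^{(m)}$: $P_i/N_i^{(m)}$ is finite because $N_i^{(m)}$ has finite index in $P_i$ (a finite intersection of finite-index subgroups — finite since $P_i$ is finitely generated, having only finitely many subgroups of each bounded index). For $\overline G^{(m,f)}$: $N_i^{(m,f)} = N_i^{(m)}\cap F$ has index at most $[P_i:N_i^{(m)}]\cdot[P_i : P_i\cap F]$ — no wait, more carefully, $P_i/(N_i^{(m)}\cap F)$ embeds into $P_i/N_i^{(m)} \times P_i/(P_i\cap F)$, and since $P_i\cap F$ is the kernel of $P_i \to \mathbb Z$, the quotient $P_i/(P_i\cap F)$ is cyclic (a subgroup of $\mathbb Z$), so $P_i/N_i^{(m,f)}$ is (finite)-by-(cyclic), hence virtually cyclic. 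A relatively hyperbolic group all of whose peripheral subgroups are virtually cyclic (in particular finite) is word-hyperbolic; this handles the "word-hyperbolic" conclusion in both cases. (If $P_i \le F$ for some $i$ — i.e. that peripheral piece lies entirely in the fiber — then $N_i^{(m,f)}=N_i^{(m)}$ and the image is finite, which is still virtually cyclic, so the statement is unaffected.)

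The last assertion — that in the fibered case the image of $P_i/N_i^{(m,f)}$ is its own normaliser in $\overline G^{(m,f)}$ — I would get from the general behaviour of peripheral subgroups under Dehn filling: by Osin's theorem the images of the $P_i/N_i^{(m,f)}$ form the peripheral structure of a relatively hyperbolic group, and a maximal parabolic subgroup in a relatively hyperbolic group is always its own normaliser (it is the stabiliser of a point at infinity / equivalently malnormal up to finite index; more precisely in the virtually-cyclic-parabolic situation one uses that in a hyperbolic group a maximal virtually cyclic subgroup containing a given infinite-order element is its own normaliser, together with the fact that conjugating one peripheral into another forces them to coincide). I should be slightly careful that "the image of $P_i/N_i^{(m,f)}$" is genuinely a \emph{maximal} parabolic and not properly contained in one coming from a different index $j$; this is guaranteed by Osin's theorem, which asserts the images are the peripheral subgroups of the filled group, i.e. pairwise non-conjugate and maximal, once $m$ is large enough.

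The main obstacle I expect is purely bookkeeping: making sure a single threshold $m$ works simultaneously for (a) avoiding $\mathcal F_0$, (b) making the unfibered peripheral images finite, and (c) making the fibered peripheral images virtually cyclic with the self-normalising property — and checking that passing from $N_i^{(m)}$ to $N_i^{(m)}\cap F$ does not spoil the Osin hypothesis, which is immediate since shrinking $N_i$ only makes it easier to avoid a fixed finite set. There is no real difficulty; the content is entirely in Osin's Dehn filling theorem plus the elementary structure of virtually cyclic groups and of parabolics in relatively hyperbolic groups.
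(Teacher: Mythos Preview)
Your proposal is correct and follows essentially the same approach as the paper: both derive the relative hyperbolicity and word-hyperbolicity from Osin's Dehn filling theorem applied to the two families of kernels, and both obtain the self-normalising property from the fact that infinite maximal parabolic subgroups of a relatively hyperbolic group equal their own normalisers (the paper cites \cite[Thm~1.14]{Osin} for exactly this). The paper's proof is extremely terse---it treats everything except the last clause as immediate from the Dehn filling theorem stated just before---so your write-up simply spells out the details the paper leaves implicit (residual finiteness to avoid the finite exceptional set, finiteness/virtual-cyclicity of the peripheral quotients, the inclusion $N_i^{(m,f)}\subseteq N_i^{(m)}$).
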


Only the last conclusion requires an explanation: any infinite maximal
parabolic subgroup of a relatively hyperbolic group is its own
normaliser, by \cite[Thm
1.14]{Osin}. The Dehn filling theorem actually says that the image of $P_i/N_i^{(m,f)}$ is a maximal parabolic subgroup of a relatively hyperbolic structure and it is infinite. The conclusion follows.

Consider a white vertex $w$ of the JSJ decomposition $\bbJ_\alpha$,
and its group $G_w$, and its relatively hyperbolic peripheral
structure $\calP(w)$.

The considerations above apply to $(G_w, \calP(w))$  since it is residually finite, relatively hyperbolic, and endowed with a natural quotient onto $\bbZ$.

For fibered Dehn fillings, we must introduce some conditions and find Dehn fillings that satisfy them. In particular we introduce the $\alpha$-certification of Dehn fillings, already used in \cite{D_MSJ}, that will permit, first the use of \cite[Prop. 2.4]{D_MSJ} (that we can collect a list of automorphisms for which the images of edge groups through Dehn fillings is well-behaved), and  later, to prove   Proposition \ref{prop;output_controlled_in_vertices} (through Lemma \ref{lem;216}) that collects a list of so-called fibre-controlled automorphisms (see the definition before the said Proposition).

We call a Dehn filling $\overline{G_w}^{(m,f)}$ a \define{resolving Dehn filling}  if all the following conditions are fulfilled:
\begin{itemize}
\item the quotient is hyperbolic and rigid (in the sense that it has
  no elementary splitting and it is not a virtual surface group), and for any $P_i$ in $\calP(w)$, its image  in  $\overline{G_w}^{(m,f)}$ is naturally isomorphic to $P_i/N_i^{m,f}$, 
\item and the Dehn filling is $\alpha$-certified as defined in \cite{D_MSJ}, that is:  for each (cyclic) subgroup $C$ in $\calE_\calZ(w)$, the centraliser of the image of $C$ in the
quotient is equal to the image of the centraliser of $C$. 
\end{itemize}

Observe that the kernel of the quotient map $G_w\to \overline{G_w}^{(m,f)}$ is in the fiber, hence the peripheral
  subgroups in $\calE_\calZ(w)$ (that are transverse to $\calP(w)$) map injectively in the quotient.

\begin{lem} If all small subgroups of the fibres of the  peripheral subgroups of $G_w$ are finitely generated, then there is an algorithm that, given a resolving Dehn
  filling of $G_w$, terminates and produces a proof that it is resolving.
  Moreover, for all $m_0$, there are resolving Dehn fillings of $G_w$
  obtained by a choice of $m$ larger than $m_0$.
\end{lem}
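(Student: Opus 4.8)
The plan is to verify decidability of each of the four defining conditions of a resolving Dehn filling, and then to establish existence at arbitrary depth. First I would address the existence statement, since it is the cleaner half. Given $m_0$, I want to produce $m \geq m_0$ for which $\overline{G_w}^{(m,f)}$ is resolving. The kernel of $K^{(m,f)}$ is fibered by construction, so the first bullet is automatic. For the second bullet I invoke the structure of $G_w$: it is a rigid (non-elementary, with finite outer automorphism group relative to $\calP(w)$) vertex group of the JSJ, hence has no essential elementary splitting, and by Proposition \ref{prop;noQH} no surface-like structure. By \cite{Osin_filling} (the Dehn filling theorem), for $m$ sufficiently large $\overline{G_w}^{(m,f)}$ is word-hyperbolic and relatively hyperbolic with virtually cyclic peripheral subgroups; moreover deep enough fillings preserve the non-existence of essential splittings over virtually cyclic subgroups (one transports a splitting of the quotient back to a splitting of $G_w$ using that the filling is a quotient with controlled kernel — this is the standard argument that rigidity passes to deep fillings). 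For the third bullet, the peripheral subgroups transverse to $\calP(w)$, i.e. the cyclic edge groups in $\calE_\calZ(w)$, are loxodromic; a loxodromic element has trivial intersection with any parabolic subgroup, so for $m$ large $N_i^{(m,f)}$ avoids the finitely many relevant elements and the transverse peripheral subgroups inject. For the fourth bullet, $\alpha$-certification for the peripheral structure $\calP_\calZ(w)$ of loxodromic cyclic subgroups holds for all sufficiently deep fibered Dehn fillings by \cite[Lemma 2.12]{D_MSJ}. Taking $m$ larger than the maximum of the finitely many thresholds produced above gives a resolving Dehn filling with $m > m_0$.

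For the algorithmic half, I want an algorithm that, given a Dehn filling presented by the data $(m, (N_i^{(m,f)})_i)$ together with a claim that it is resolving, terminates and certifies this. The strategy is to enumerate proofs. Since $G_w$ lies in a hereditarily algorithmically tractable class (as an edge/vertex group of a JSJ whose peripheral groups are in such a class), a finite presentation of $G_w$ and of the $P_i$ is computable, hence a finite presentation of the quotient $\overline{G_w}^{(m,f)}$ is computable. The first bullet (kernel in the fiber) is a finite check: one verifies that each chosen generator of each $N_i^{(m,f)}$ maps to $0$ under the given map $G_w \to \bbZ$. The fact that the quotient is hyperbolic is semi-decidable (enumerate over linear isoperimetric constants / thin-triangle constants and check a Dehn presentation), and for a group that genuinely is hyperbolic this search terminates; once a hyperbolicity constant is in hand, rigidity — no essential elementary, i.e. virtually cyclic, splitting and not a virtual surface group — is decidable for hyperbolic groups by the algorithm computing the JSJ of a hyperbolic group (\cite{DGr}, or \cite{DT_invent_2019, Tou}), since one can compute the canonical JSJ and read off whether it is trivial and whether the group is a virtual surface group. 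The third bullet, that the transverse peripheral subgroups inject, is a check that the finitely many generators of each such edge group have nontrivial image of the right order, which is decidable once we can solve the word problem in the (hyperbolic) quotient. The fourth bullet, $\alpha$-certification, asks that the centraliser of the image of each $C \in \calP_\calZ(w)$ equals the image of the centraliser of $C$; in a hyperbolic group the centraliser of an infinite-order element is virtually cyclic and computable, and the image of the centraliser of $C$ in $G_w$ is generated by computable elements, so this equality of virtually cyclic subgroups of a hyperbolic group is decidable.

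The main obstacle, and the step that needs the most care, is the passage of rigidity to the Dehn filling and its effective verification — concretely, ruling out that the quotient acquires a new essential splitting over a virtually cyclic subgroup, or degenerates to a virtual surface group. For existence this is handled by the general principle that deep Dehn fillings of a rigid relatively hyperbolic group remain rigid, but one must be careful that the relevant virtually cyclic splittings in the quotient correspond to elementary (cyclic-or-parabolic) splittings upstairs; the transverse-peripheral-injectivity and $\alpha$-certification conditions are precisely what is needed so that the loxodromic cyclic edge groups do not collapse into parabolics and create spurious splittings. For the algorithm, the subtlety is that "is hyperbolic" and "is rigid" are only semi-decidable in general, so termination is only guaranteed when the input genuinely is a resolving Dehn filling — which is exactly what the statement asks ("given a resolving Dehn filling, terminates and produces a proof"). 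I would organize the write-up so that the existence argument fixes, for each of the four conditions, an explicit threshold depending only on the finite set $F$ from the Dehn filling theorem and on the finitely many loxodromic edge-group generators, and then the verification algorithm simply searches in parallel for certificates of each of the four conditions, halting when all four are found.
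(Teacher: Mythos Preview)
Your algorithmic half is essentially the paper's approach, with equivalent references: semi-decide hyperbolicity (the paper cites \cite{Papasoglu}), certify rigidity by computing the JSJ (the paper cites \cite[Corollary 3.4]{DG_gafa_2010}), and solve word and centraliser problems in the hyperbolic quotient for peripheral injectivity and $\alpha$-certification (the paper cites \cite[Lemma 2.8]{DG_gafa_2010}). Likewise, your existence arguments for fiberedness, hyperbolicity, peripheral injectivity, and $\alpha$-certification are correct and match the paper.

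The genuine gap is in your rigidity-at-depth argument. You write that ``one transports a splitting of the quotient back to a splitting of $G_w$'' and call this ``the standard argument that rigidity passes to deep fillings,'' but there is no such blanket principle: quotients can acquire new splittings, and lifting requires real hypotheses. Your later remark that $\alpha$-certification and transverse-peripheral-injectivity are ``precisely what is needed'' to prevent spurious splittings is also not right --- those conditions control the behaviour of the \emph{specified} loxodromic edge groups, not arbitrary virtually cyclic subgroups of the filling over which a new splitting might occur.

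The paper handles rigidity in two genuinely different steps. First, for \emph{peripheral} splittings (splittings relative to $\calP(w) \cup \calE_\calZ(w)$ over parabolic subgroups), it invokes \cite[Theorem 1.8]{GM_DFandES}, which does give the lifting you want for sufficiently deep fillings --- but only after verifying that $G_w$ is one-ended (it is, being a semidirect product of a finitely generated group with $\bbZ$) and that every small subgroup of $G_w$ is finitely generated (a small subgroup must meet the fiber in a cyclic or trivial group, hence is cyclic-by-cyclic). Second, for splittings relative to $\calP(w)\cup\calE_\calZ(w)$ over a \emph{maximal cyclic} subgroup, no lifting theorem is invoked; instead the paper argues by contradiction via a limit argument: if arbitrarily deep fillings $\overline{G_w}^{(m,f)}$ all admitted such splittings, a diagonal sequence of Dehn twists over them would contradict \cite[Corollary 5.10]{DG_Duke} applied with $G'=G=G_w$. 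You should replace your hand-wave with these two ingredients.
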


\begin{proof}
  By the Dehn filling theorem, for sufficiently deep Dehn
  fillings, the quotient is hyperbolic relative to virtually cyclic
  subgroups, hence word-hyperbolic, and the peripheral quotients
  inject. By \cite[Lemma 2.12]{D_MSJ}, sufficiently deep Dehn fillings
  satisfy the $\alpha$-certification property. 
  
  Let us argue that they
  are rigid. For that we will show that they have no peripheral
  splitting (i.e. no splitting relative to the peripheral structure
  $\calP(w) \cup \calE_{\calZ}(w)$, over some parabolic subgroups),
  and no splitting relative to their parabolic peripheral structure,
  over a maximal cyclic subgroup. We need a short digression in order
  to use Groves and Manning's result  \cite[Theorem 1.8]{GM_DFandES}.
        Any small subgroup $U$ of $G_w$ has to intersect the fibre as a
        small group, hence in the peripheral structure. By assumption, this intersection is finitely generated. 
        Therefore, $U$  is finitely generated, since it
        is (f.g. small)-by-cyclic. Also, the group $G_w$, as a semidirect
        product of a finitely generated group with $\mathbb{Z}$, is
        one-ended.  We can therefore use \cite[Theorem
        1.8]{GM_DFandES} to conclude that sufficiently deep Dehn
        fillings have no peripheral splittings, for the peripheral
        structure $\calP(w) \cup \calE_{\calZ}(w)$.  Finally, in order
        to check that they don't have splittings relative to the 
        peripheral structure $\calP(w) \cup \calE_{\calZ}(w)$ over a
        maximal cyclic subgroup, we invoke \cite{DG_Duke}: if a
        sequence of such Dehn fillings all had such a splitting,
        considering a diagonal sequence of Dehn twists over these
        splittings, one gets a contradiction to \cite[Corollary 5.10 (of Proposition 5.8)]{DG_Duke}, applied to $G'=G=G_w$.
        Therefore, sufficiently deep Dehn fillings of $G_w$ are
        rigid. 
        
        All the properties are algorithmically verified, by
        \cite{Papasoglu} (for word hyperbolicity), \cite[Corollary
        3.4]{DG_gafa_2010} (for rigidity), and \cite[Lemma
        2.8]{DG_gafa_2010} (for the computation of centralizers, and
        identification of the images of the peripheral quotients).
\end{proof}

\subsubsection{Back to lists for white vertices}

We come back to our context, in which $w$ is a white vertex  of the canonical JSJ decomposition of $\torus \alpha$, and $w'$ is its image by some graph isomorphism $\phi_J$.  

Observe that the groups $G_w$ and $G_{w'}$ are finitely presented, relatively hyperbolic with respect to the peripheral structures $\calP(w), \calP(w')$, which consist of infinite residually finite groups, and admit no peripheral splitting over an elementary group.  We may therefore apply \cite[Propostion 5.1]{DT_invent_2019} in order to obtain the following. 
 
\begin{prop}[See {\cite[Proposition
  5.1]{DT_invent_2019}}] \label{prop;orbit_m}

    Assume that congruences effectively separate the torsion in the peripheral subgroups of $G_w$.
  
	Given a marking of $\calE_\calZ(w) \cup \calE_\calP(w)$, its orbit under the group $\out{G_w, \calE_w}$ is computable.
\end{prop}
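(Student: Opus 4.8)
The plan is to deduce the statement from \cite[Proposition 5.1]{DT_invent_2019}, so the first task is simply to check that its hypotheses are met. As recorded just above, $G_w$ and $G_{w'}$ are finitely presented, residually finite, relatively hyperbolic with respect to the parabolic structures $\calP(w)$, $\calP(w')$ — which consist of \emph{infinite} subgroups — and admit no peripheral splitting over an elementary subgroup (this is part of being a rigid white vertex of the JSJ decomposition, so in particular $\out{G_w,\calE}$ is finite). The standing hypothesis of the proposition supplies the remaining ingredient: congruences separate the torsion effectively in the parabolic subgroups of $G_w$ and $G_{w'}$. Granting these, \cite[Proposition 5.1]{DT_invent_2019} gives exactly the computability of the orbit of a marking of $\calE_\calZ(w)\cup\calE_\calP(w)$ under $\out{G_w,\calE}$, and likewise for $w'$.

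For the reader's convenience I indicate how one argues inside that proposition. One first produces, using the Lemma above, a Minkowskian \emph{resolving} Dehn filling $\bar G = \overline{G_w}^{(m)}$ for $m$ large: a word-hyperbolic quotient $\pi:G_w\onto \bar G$ in which the images of the parabolics are finite and characteristic (each $N_i^{(m)}$ being the intersection of the index-$\le m$ subgroups of $P_i$), in which the transverse cyclic peripheral subgroups $\calE_\calZ(w)$ inject, and which is rigid with respect to the induced peripheral structure $\bar\calE$ on the images of the edge groups. Since every automorphism in $\out{G_w,\calE}$ preserves $\calP(w)$, hence the characteristic kernel $K^{(m)}$, it descends, yielding a homomorphism $\rho:\out{G_w,\calE}\to\out{\bar G,\bar\calE}$; and the target is a finite group which, $\bar G$ being rigid and word-hyperbolic, is computable together with its action on the (finitely many) markings of $\bar\calE$, by the algorithms of \cite{DG_gafa_2010}.

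The technical heart is that $\rho$ is injective. An element of $\ker\rho$ restricts on each parabolic $P_i$ to an automorphism lying in the kernel of $\out{P_i}\to\out{P_i/N_i^{(m)}}$, which is torsion free by the Minkowskian hypothesis; but $\out{G_w,\calE}$ is finite because $w$ is rigid, so this restriction has finite order and is therefore inner on $P_i$. Adjusting by an inner automorphism of $G_w$, one reduces to an automorphism that is the identity on every $P_i$ and induces the identity on $\bar G$, and then the rigidity of $G_w$ relative to $\calP(w)$, together with the depth of the filling which makes the parabolics ``fill up'' enough of $G_w$, forces it to be inner. With $\rho$ injective, the same Minkowskian property shows that the restriction to each $P_i$ of a lift $\Phi$ of $\bar\Phi\in\rho(\out{G_w,\calE})$ is determined up to inner automorphism of $P_i$ by $\bar\Phi$; hence the conjugacy class $\Phi\cdot\mu$ of the marking $\mu$ — whose $\calE_\calZ(w)$-part is canonical and whose $\calE_\calP(w)$-part lives inside the $P_i$'s — depends only on $\bar\Phi\cdot\bar\mu$. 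One therefore computes the orbit of $\mu$ by computing the finite orbit of $\bar\mu$ under $\out{\bar G,\bar\calE}$, keeping those elements realised by automorphisms of $G_w$ — a decidable condition, the realisability of the parabolic parts being handled by the congruence/Minkowski property and that of the remaining part by the hyperbolicity and rigidity of $\bar G$ and the solvability of the conjugacy problem in $G_w$ — and reading off the corresponding markings of $\calE$. The main obstacle, and the place where all hypotheses are used at once, is precisely this faithful descent $\rho$: one must know that passing to the finite-parabolic Dehn filling loses no outer automorphism, which fails without the Minkowskian hypothesis and without rigidity of the white vertex.
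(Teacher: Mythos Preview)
Your proposal is correct and matches the paper's approach exactly: check that the hypotheses of \cite[Proposition~5.1]{DT_invent_2019} are met (finite presentation, residual finiteness, relative hyperbolicity with infinite parabolics, rigidity, effective Minkowskian parabolics) and invoke it; the paper itself gives no proof beyond pointing out that the relevant Dehn fillings are the cofinite Minkowskian ones $\overline{G_w}^{(m)}$ and deferring to the literature. One small terminological slip in your added sketch: in this paper the adjective ``resolving'' and the preceding Lemma are reserved for the \emph{fibered} fillings $\overline{G_w}^{(m,f)}$, not for the cofinite $\overline{G_w}^{(m)}$ that you (correctly) use here --- the hyperbolicity, rigidity, and peripheral-injection properties of the cofinite fillings come from \cite{DT_invent_2019} rather than from the Lemma just above.
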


In order to prove this Proposition, Dehn fillings were used such that
the corresponding Dehn kernels were finite index subgroups of the
maximal parabolic subgroups, and, in particular, it is important that
these subgroups are chosen deep enough to ensure that Dehn fillings 
are congruences that separate the torsion (in $\out{G_w}$). We do not give the detail of the argument, since it
is covered by the case treated in \cite[Proposition 5.1]{DT_invent_2019}.

In the next Proposition, we will use Dehn fillings with fibered Dehn
kernels (i.e. lying in the fibre). It will not be important here whether or not there are congruences that separate the torsion.

\begin{prop} \label{prop;double_DF}  Consider $w$  a white vertex in the graph of groups $\bbJ_\alpha$, and $w' = \phi_J(w)$ a white vertex in the in graph of groups $\bbJ_\beta$.      Consider the groups $G_w, G_{w'}$ with their  (unmarked ordered)    peripheral structures  $\calP(w)$,  $\calE_\calZ(w)$ and  $\calE_\calP(w)$, and      $\calP(w')$,  $\calE_\calZ(w')$ and  $\calE_\calP(w')$.    Choose a marking     $(\calE_\calZ(w))_\frakm, (\calE_\calP(w))_\frakm$.   
	
	\begin{itemize}
\item 	It is decidable whether there exists a fibre-and-orientation preserving isomorphism from $(G_w, \calE_\calZ(w), \calE_\calP(w))$ to $(G'_{w'}, \calE_\calZ(w'), \calE_\calP(w'))$. 
	
\item If there exists such an isomorphism, let
  $ (\calE_\calZ(w'))_{\frakm}, (\calE_\calP(w'))_\frakm$ be the image
  in $G_{w'}$ of the chosen marking in $G_w$.  One can compute the finite
  orbit of $ (\calE_\calZ(w'))_\frakm, (\calE_\calP(w'))_\frakm$ by
  $\out{G_{w'}, \calE_{w'})}$.
	
\item 	For any marking  $ \calE_\calZ(w')_{\frakm'}, \calE_\calP(w')_{\frakm'}$ in this orbit, it is decidable whether there exists a  fibre-and-orientation preserving isomorphism  \[(G_w, (\calE_\calZ(w))_\frakm, (\calE_\calP(w))_\frakm) \to (G_{w'}, \calE_\calZ(w')_{\frakm'}, \calE_\calP(w')_{\frakm'}).\]     (We then say that  $ \calE_\calZ(w')_{\frakm'}, \calE_\calP(w')_{\frakm'}$ is an \define{admissible marking}).

\item  For all markings  $ \calE_\calZ(w')_{\frakm'}, \calE_\calP(w')_{\frakm'}$   an algorithm   computes a list  $\calL_{\frakm, \frakm'} $  of fibre-and-orientation preserving isomorphisms of the form \[(G_w, \calE_\calZ(w)_\frakm, \calE_\calP(w)_\frakm) \to (G'_{w'}, \calE_\calZ(w')_{\frakm'}, \calE_\calP(w')_{\frakm'}),\] such that  the following holds  :   

for    any  fibre-and-orientation preserving isomorphism \[\psi: (G_w,
  \calE_\calZ(w)_\frakm, \calE_\calP(w)_\frakm) \to (G'_{w'},
  \calE_\calZ(w')_{\frakm'}, \calE_\calP(w')_{\frakm'}),\] there
exists a resolving Dehn kernel $K'$ of $G_{w'}$, and $g\in G_{w'}$,
and an element $\phi\in \calL_{\frakm, \frakm'} $ such   that $\ad{g} \circ \psi  $ and $ \phi$ coincide in the quotient  $G_{w'}/K'$.\footnote{that is: for all
$h\in G_w$, there exists $z_h\in K'$ for which
$\psi(h)^g=\phi(h)z_h$.}
	
\end{itemize}
\end{prop}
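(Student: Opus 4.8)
\emph{Setup and descent.} The plan is to route the whole question through a deep fibered resolving Dehn filling of $G_w$ and $G_{w'}$, where the quotients are word-hyperbolic and rigid and hence accessible to the algorithms of \cite{Papasoglu} and \cite{DG_gafa_2010}, and then to lift back, the ambiguity of the lift being exactly the slack ``$\psi(h)^g=\phi(h)z_h$'' of the last bullet. Throughout, $G_w$ (resp. $G_{w'}$) fibers over $\bbZ$ with fiber $F_w=F\cap G_w$ (resp. $F_{w'}$), the kernel of its canonical quotient onto $\bbZ$. Using the Lemma just proved, I would fix a resolving Dehn filling $G_{w'}\onto\overline{G_{w'}}^{(m,f)}$ with $m$ larger than any prescribed $m_0$ and certified resolving, together with the matching fibered kernel $K_w^{(m,f)}$ in $G_w$. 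The kernel $K_{w'}^{(m,f)}$ is canonically determined by $(\calP(w'),F_{w'},m)$, its generating subgroups being the $N_i^{(m,f)}=N_i^{(m)}\cap F_{w'}$ with $N_i^{(m)}$ the characteristic subgroup of $P_i$ cut out by the index-$\le m$ subgroups. Consequently any fiber-and-orientation preserving isomorphism $\psi$ among the objects of the Proposition — which, being an isomorphism of relatively hyperbolic groups, carries $\calP(w)$ to $\calP(w')$ through one of the finitely many bijections of parabolic conjugacy classes compatible with $\phi_J$, and which carries $F_w$ to $F_{w'}$ — carries $K_w^{(m,f)}$ to $K_{w'}^{(m,f)}$ and descends to an isomorphism $\bar\psi\colon\overline{G_w}^{(m,f)}\to\overline{G_{w'}}^{(m,f)}$. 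By the properties of fibered Dehn fillings recorded above, $\bar\psi$ matches up the (now virtually cyclic, self-normalising) parabolic images, carries the kernel of $\overline{G_w}^{(m,f)}\to\bbZ$ onto that of $\overline{G_{w'}}^{(m,f)}\to\bbZ$ while preserving orientation, and, the peripheral quotients being injective, carries $\calE_\calZ(w)$ and the images of the chosen markings of $\calE_\calZ(w),\calE_\calP(w)$ to the corresponding data in the target.

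\emph{Solving in the quotient, and the decidability bullets.} Since $\overline{G_w}^{(m,f)}$ and $\overline{G_{w'}}^{(m,f)}$ are word-hyperbolic and rigid, \cite[Corollary 3.4]{DG_gafa_2010} certifies rigidity, \cite[Lemma 2.8]{DG_gafa_2010} computes the relevant subgroups (parabolic images, fibers, cyclic peripheral subgroups, marked tuples), and their outer automorphism groups are finite; hence one can decide whether an isomorphism $\overline{G_w}^{(m,f)}\to\overline{G_{w'}}^{(m,f)}$ respecting all of this structure exists, and list finitely many such isomorphisms $\bar\phi_1,\dots,\bar\phi_N$ meeting every coset modulo post-composition by inner automorphisms — deciding marking-compatibility uses the fiber-and-orientation preserving Mixed Whitehead Problem in the (virtually cyclic) parabolic quotients, available from assumption (4). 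For the first and third bullets I then lift each $\bar\phi_j$ as in the next paragraph; if some lift is produced and verified to be a genuine fiber-and-orientation preserving isomorphism respecting the markings (the word problem in $G_w,G_{w'}$ and checks on generators suffice) answer ``yes'', otherwise ``no''. This is correct because, by the previous paragraph, any honest $\psi$ descends to some $\bar\phi_j$ up to inner, whose lift is therefore sought and found; dropping the markings gives the first bullet. The second bullet is then immediate: transport the chosen marking to $G_{w'}$ via one isomorphism found and apply Proposition~\ref{prop;orbit_m}, legitimate since congruences effectively separate torsion in the parabolics of $G_w$ and $G_{w'}$ by assumption (3).

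\emph{Lifting and the list $\calL_{\frakm,\frakm'}$.} To lift a candidate $\bar\phi\colon\overline{G_w}^{(m,f)}\to\overline{G_{w'}}^{(m,f)}$ I would fix a finite presentation of $G_w$ adapted to its maximal parabolics; on each parabolic subgroup of $G_w$ — a sub-mapping torus covered by assumption (2) — lift $\bar\phi$ to a fiber-and-orientation preserving isomorphism onto the corresponding parabolic of $G_{w'}$, matching the markings of the parabolic edge groups with the Mixed Whitehead algorithm (assumption (4)); then choose lifts of the remaining generators so that every relator of $G_w$ holds in $G_{w'}$, any residual discrepancy lying automatically in $K_{w'}^{(m,f)}$. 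When this succeeds it yields $\phi_j\colon G_w\to G_{w'}$, and I set $\calL_{\frakm,\frakm'}$ to be the collection of $\ad{g}\circ\phi_j$ as $\phi_j$ ranges over the successful lifts and $g$ over a finite set of representatives coming from the small modular group of $\bbJ_\beta$ at $w'$ together with the finite inner-ambiguity of the quotient computation. Then, given any fiber-and-orientation preserving $\psi$ between the marked objects, its descent $\bar\psi$ equals some $\bar\phi_j$ up to an inner automorphism of $\overline{G_{w'}}^{(m,f)}$, so for a suitable $g\in G_{w'}$ the map $\phi=\ad{g}\circ\phi_j\in\calL_{\frakm,\frakm'}$ agrees with $\psi$ modulo $K_{w'}^{(m,f)}$, which is a resolving Dehn kernel; this is exactly the conclusion of the last bullet with $K'=K_{w'}^{(m,f)}$.

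\emph{Main obstacle.} The delicate step is the lifting. A priori not every structure-preserving isomorphism of the hyperbolic rigid quotient comes from an isomorphism of $G_w$, and even when it does the lift is determined only up to $K_{w'}^{(m,f)}$ and up to automorphisms of $G_w$ that become trivial in the filling; one must argue that, because a resolving Dehn filling can be taken arbitrarily deep (the Lemma), no honest $\psi$ escapes detection through a too-shallow filling, and that on the parabolic pieces — where all failure of rigidity of $G_w$ is concentrated — assumptions (2) and (4) genuinely suffice to carry out the lift on the nose. Orchestrating the interplay between the finite list computed in the filling and these parabolic lifts, with the ``$z_h\in K'$'' slack as the only residual freedom, is where the real work lies.
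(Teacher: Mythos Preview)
Your overall architecture --- pass to a resolving fibered Dehn filling, work in the hyperbolic rigid quotient, then lift --- matches the spirit of the paper, but there is a genuine gap in the decidability arguments (bullets one and three), and the lifting step as you describe it does not produce what is required.

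\textbf{The decidability gap.} You fix a single depth $m$ and then say: list the structure-preserving isomorphisms of the quotient, attempt to lift each, and answer ``no'' if no lift succeeds. This can give false negatives. If a genuine isomorphism $\psi$ exists, its descent $\bar\psi$ is indeed among your $\bar\phi_j$ up to inner, but your \emph{chosen} lift $\phi_j$ agrees with $\psi$ only modulo $K_{w'}^{(m,f)}$, and you have given no reason why $\phi_j$ itself should be an isomorphism of $G_w$ onto $G_{w'}$. So the verification ``is $\phi_j$ a genuine isomorphism?'' may fail for every $j$ even when $\psi$ exists. The paper avoids this entirely: for bullets one and four it invokes \cite[Proposition 2.4]{D_MSJ} as a black box (that result already produces either a certificate of non-isomorphy or a list of honest isomorphisms with the mod-$K'$ approximation property), and for bullet three it runs \emph{two semi-algorithms in parallel}: an enumeration of homomorphisms looking for an actual marked isomorphism (the ``yes'' branch), and an enumeration over all characteristic fibered Dehn fillings $\overline{G_w}^{(m,f)},\overline{G_{w'}}^{(m,f)}$, using \cite[Theorem~8.1]{DG_gafa_2010} in each to detect the absence of a marked isomorphism in the quotient; the point, supplied by \cite[Proposition 5.5]{DG_Duke}, is that non-isomorphy upstairs becomes visible in \emph{some} sufficiently deep filling. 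No single $m$ suffices, and there is no lifting step at all.

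\textbf{The lifting gap.} Your lift is not well-defined as a homomorphism. You write ``choose lifts of the remaining generators so that every relator of $G_w$ holds in $G_{w'}$, any residual discrepancy lying automatically in $K_{w'}^{(m,f)}$'' --- but a nontrivial residual discrepancy means the relator does \emph{not} hold, so you have not defined a map $G_w\to G_{w'}$, let alone an isomorphism, and the elements of $\calL_{\frakm,\frakm'}$ are required by the statement to be genuine fiber-and-orientation preserving isomorphisms. Assumptions (2) and (4) help you on the parabolic pieces, but they do not assemble a global homomorphism for you; that is precisely what the machinery of \cite{D_MSJ} is designed to accomplish, and the paper simply cites it rather than reproducing it.
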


Observe that in the third statement, the peripheral structures are marked. From the second point we already know that there exists an isomorphism from $G_w$ to $G_{w'}$ intertwining the markings, but we have no guarantee that this isomorphism can be taken fibre-and-orientation preserving.  Thanks to the third point we will know whether it can or not.

The first and last points of the statement of Proposition \ref{prop;double_DF} make a reformulation of  \cite[Proposition 2.4]{D_MSJ}, to which we refer, but that we don't syntactically reproduce.  This  \cite[Proposition 2.4]{D_MSJ} gives an algorithm that, given relatively hyperbolic  groups with residually finite peripheral subgroups (that fiber, endowed with a fiber and an orientation), without any non-trivial peripheral splitting (relative to their transversal peripheral structures),  terminates and provides a certificate of non-isomorphy, or a collection  of isomorphisms as in the last point, or a certain  cyclic splitting of one of the groups. We note that the argument of the proof of \cite[Proposition 2.4]{D_MSJ} does not require the fiber $F$ to be free, nor finitely generated. The crucial properties are the existence of the fiber, a well-defined orientation,  that the suspension is relatively hyperbolic relative to residually finite peripheral subgroups, and that there are no peripheral edge groups in its JSJ decomposition.
In our case, there are no non-trivial peripheral splittings  or cyclic splittings   (relative to their transversal peripheral structures), because we consider white vertex groups in a JSJ decomposition. 

\begin{proof}
The first assertion  is thus ensured by \cite[Proposition 2.4]{D_MSJ}.  
The second  is the previous Proposition \ref{prop;orbit_m}.
The third assertion is an application of  \cite[Proposition 5.5]{DG_Duke}
used with the constraint that the markings must be intertwined, as we
detail now. If there is no such isomorphism, \cite[Proproposition 5.5]{DG_Duke} ensures that,
in some characteristic 
  fibered Dehn fillings
$\overline{G_w}^{(m,f)}, \overline{G_{w'}}^{(m,f)}$   of $G_w$ and
$G_{w'}$ (with same $m$),  
 this will be apparent.  

However, in such a Dehn filling, all parabolic subgroups
have become virtually cyclic, and therefore, by the Dehn filling
theorem, the Dehn filling is word-hyperbolic. 

Therefore, for a fixed pair of corresponding fibered Dehn fillings  as
above,
 by the
solvability of the isomorphism problem for hyperbolic groups with marked peripheral structure
\cite[Thm 8.1]{DG_gafa_2010}, it is decidable whether or not there is  such isomorphism
between these Dehn fillings. 

 Enumerating the characteristic Dehn fillings in the fibre, and checking for each corresponding pairs of them this absence allows to detect if there exists indeed a pair for which there is no isomorphism. On the other hand, if there is an isomorphism of the correct form between $G_w$ and $G_{w'}$, it will be found by enumeration.
 
The fourth point is again treated in \cite[Proposition 2.4]{D_MSJ}.

\end{proof}

\begin{prop}\label{prop;the_white_album}
Consider $w$ a white vertex in the graph of groups $\bbJ_\alpha$, and
$w' = \phi_J(w)$, and $G_w$ and  $G_{w'}$ their groups with (unmarked ordered) peripheral
structures  $\calP(w), \calE_\calZ(w),
\calE_\calP(w)$, and $\calP(w'), \calE_\calZ(w'),
\calE_\calP(w')$.  One can compute a finite list $\calL_w$ that
 \begin{itemize} 
 \item is empty if and only if 
    \[(G_w, \calP(w), \calE_\calZ(w),
        \calE_\calP(w)), \hbox{ and } (G_{w'}, \calP(w'), \calE_\calZ(w'),
        \calE_\calP(w'))\] 
    are not isomorphic by a fibre and orientation
    preserving isomorphism,
  \item contains  fibre and orientation
preserving  isomorphisms 
    \[(G_w, \calP(w), \calE_\calZ(w),
        \calE_\calP(w)) \to (G_{w'}, \calP(w'), \calE_\calZ(w'),
        \calE_\calP(w'))\]
    \item is such that, for all isomorphisms \[\psi: (G_w, \calP(w), \calE_\calZ(w),
\calE_\calP(w)) \to (G_{w'}, \calP(w'), \calE_\calZ(w'),
\calE_\calP(w')),\] there is $\phi$ in $\calL_w$, $g\in G_{w'}$ and a
resolving Dehn kernel $K'$ of $G_{w'}$  such that for all $h\in G_w$,
$ \psi^g (h)\in \phi(h) K'$.
\end{itemize}
  \end{prop}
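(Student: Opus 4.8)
The plan is to combine the two parts of the preceding Proposition \ref{prop;double_DF}: the orbit computation of markings (second and third bullets) and the conditional list $\calL_{\frakm,\frakm'}$ of fiber-and-orientation preserving isomorphisms (fourth bullet), ranging over all pairs of admissible markings. The key point is that an isomorphism $\psi$ of the quadruples $(G_w,\calP(w),\calE_\calZ(w),\calE_\calP(w))\to(G_{w'},\dots)$ which preserves fiber and orientation is, after forgetting the relatively hyperbolic peripheral structure $\calP(w)$, the same data as a fiber-and-orientation preserving isomorphism of the triples $(G_w,\calE_\calZ(w),\calE_\calP(w))\to(G_{w'},\calE_\calZ(w'),\calE_\calP(w'))$ matching some marking on one side to some marking on the other; the constraint coming from $\calP(w)$ is automatic since $\calE_\calP(w)$ records exactly the parabolic edge groups and $\calP(w)$ is determined by the ambient relatively hyperbolic structure, which $\psi$ must respect.

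First I would fix a marking $\frakm=((\calE_\calZ(w))_\frakm,(\calE_\calP(w))_\frakm)$ on $G_w$ (choosing, for $\calE_\calZ$, the canonical positively-oriented generators, and some generating tuples for the parabolic edge groups). Using the first bullet of Proposition \ref{prop;double_DF}, decide whether there is any fiber-and-orientation preserving isomorphism of the triples at all; if not, output $\calL_w=\emptyset$ and stop, noting this also certifies the quadruples are not isomorphic in the required way. Otherwise, by the second bullet compute the image marking $\frakm_0'$ on $G_{w'}$ and then, by Proposition \ref{prop;orbit_m} applied inside $\out{G_{w'},\calE}$, compute the (finite) orbit $\calO$ of $\frakm_0'$; for each candidate marking $\frakm'\in\calO$, use the third bullet to test whether $\frakm'$ is admissible, i.e.\ whether some fiber-and-orientation preserving isomorphism intertwines $\frakm$ with $\frakm'$. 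For each admissible $\frakm'$, invoke the fourth bullet of Proposition \ref{prop;double_DF} to produce the list $\calL_{\frakm,\frakm'}$, and set $\calL_w=\bigcup_{\frakm'}\calL_{\frakm,\frakm'}$, a finite list of fiber-and-orientation preserving isomorphisms of the triples, which are then automatically isomorphisms of the quadruples because they respect $\calE_\calP(w)$ hence the parabolic structure $\calP(w)$.

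It remains to check the three itemized properties. Emptiness: $\calL_w$ is empty exactly when the first-bullet test fails or no $\frakm'\in\calO$ is admissible; in the latter case, any putative isomorphism $\psi$ of the quadruples would, after post-composition by an element of $\out{G_{w'},\calE}$ moving markings, intertwine $\frakm$ with some admissible marking in the orbit $\calO$, a contradiction — so the quadruples are indeed non-isomorphic in the fiber-and-orientation preserving sense. Membership: each element of each $\calL_{\frakm,\frakm'}$ is by construction fiber-and-orientation preserving and intertwines the (unmarked) peripheral structures, hence gives the required isomorphism of quadruples. The covering property: given any fiber-and-orientation preserving $\psi$ of the quadruples, there is $\sigma\in\out{G_{w'},\calE}$ so that $\sigma\circ\psi$ intertwines $\frakm$ with some admissible $\frakm'\in\calO$; apply the fourth bullet of Proposition \ref{prop;double_DF} to $\sigma\circ\psi$ to get $\phi\in\calL_{\frakm,\frakm'}\subseteq\calL_w$, $g\in G_{w'}$, and a resolving Dehn kernel $K'$ with $(\sigma\psi)(h)^g\in\phi(h)K'$ for all $h$. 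Finally one absorbs $\sigma$: since $\sigma$ is realized by an automorphism of $G_{w'}$ preserving $\calE$, and resolving Dehn kernels are preserved (up to conjugation) by such automorphisms — the resolving conditions (word-hyperbolic, rigid, peripheral quotients inject, $\alpha$-certified) are invariant under $\out{G_{w'},\calE}$ — one can replace $K'$ and the conjugating element to conclude $\psi^{g}(h)\in\phi(h)K'$ for all $h$, as required.

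The main obstacle, and the step deserving the most care, is exactly this last absorption of the marking-changing automorphism $\sigma$: one must verify that pulling the conclusion of Proposition \ref{prop;double_DF}'s fourth bullet back along $\sigma$ still yields a \emph{resolving} Dehn kernel (in particular that $\alpha$-certification and rigidity survive the twist, using that $\sigma$ lies in $\out{G_{w'},\calE}$ and hence commutes with the canonical fibered Dehn filling construction up to conjugacy), and that the resulting coset relation is with respect to $\psi$ itself rather than $\sigma\circ\psi$. The other steps are bookkeeping: finiteness of $\calO$ and of each $\calL_{\frakm,\frakm'}$ is guaranteed by Propositions \ref{prop;orbit_m} and \ref{prop;double_DF}, and the equivalence between isomorphisms of quadruples and of marked triples is immediate from the definitions of $\calP(w)$ and $\calE_\calP(w)$.
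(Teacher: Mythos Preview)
Your construction $\calL_w=\bigcup_{\frakm'}\calL_{\frakm,\frakm'}$ over the admissible markings is exactly what the paper does, and your verification of the first two bullets is fine. The problem is the third bullet: you introduce an auxiliary $\sigma\in\out{G_{w'},\calE}$ so that $\sigma\circ\psi$ intertwines $\frakm$ with some admissible $\frakm'$, apply the fourth point of Proposition~\ref{prop;double_DF} to $\sigma\circ\psi$, and then try to ``absorb'' $\sigma$. You correctly flag this absorption as the delicate step, and in fact as written it does not go through. Unwinding $(\sigma\psi)(h)^g\in\phi(h)K'$ by applying $\sigma^{-1}$ gives $\psi(h)^{g'}\in(\sigma^{-1}\circ\phi)(h)\cdot\sigma^{-1}(K')$ for some $g'$; but $\sigma$ is only assumed to lie in $\out{G_{w'},\calE}$, not to be fiber-and-orientation preserving, so $\sigma^{-1}\circ\phi$ need not be fiber-and-orientation preserving and there is no reason for it to lie in $\calL_w$. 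Your claim that one can keep the same $\phi$ while only replacing $K'$ and $g$ is not justified.

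The detour through $\sigma$ is, however, entirely unnecessary, and this is how the paper avoids the issue. Given any fiber-and-orientation preserving isomorphism $\psi$ of the peripheral structures, $\psi$ itself carries the fixed marking $\frakm$ to a marking $\psi(\frakm)$ on $(\calE_\calZ(w'),\calE_\calP(w'))$. Since $\psi$ and the initial isomorphism differ by an element of $\Aut{G_{w'},\calE}$, this image lies in the orbit $\calO$; and it is admissible because $\psi$ is precisely a fiber-and-orientation preserving isomorphism intertwining $\frakm$ with it. Hence the fourth point of Proposition~\ref{prop;double_DF} applies \emph{directly} to $\psi$ with $\frakm'=\psi(\frakm)$, yielding $\phi\in\calL_{\frakm,\psi(\frakm)}\subseteq\calL_w$, $g\in G_{w'}$, and a resolving Dehn kernel $K'$ with $\psi(h)^g\in\phi(h)K'$ for all $h$. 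No $\sigma$, no absorption. This is the paper's argument (stated there in a single sentence).
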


\begin{proof}
  By Proposition \ref{prop;double_DF} one can decide whether the two
  groups with (unmarked ordered) peripheral structures
  \[(G_w, \calP(w), \calE_\calZ(w), \calE_\calP(w))\;  \hbox{ and } \;
  (G_{w'}, \calP(w'), \calE_\calZ(w'), \calE_\calP(w'))\] are fibre
  and orientation preserving isomorphic or not, and if so, once a
  marking $\mathfrak m $ on $( \calE_\calZ(w), \calE_\calP(w))$ is
  chosen, one can compute all admissible markings (in the sense of
  Proposition \ref{prop;double_DF}) $\mathfrak m'$ on
  $(\calE_\calZ(w'), \calE_\calP(w'))$.  Now
  $\calL_w = \bigcup_{\mathfrak{m}'} \calL_{\mathfrak{m},
    \mathfrak{m}'}$, whose computability is given by Proposition
  \ref{prop;double_DF}. It clearly satisfies the two first points of
  the conclusion of Proposition \ref{prop;the_white_album}.  The last
  point of the Proposition \ref{prop;double_DF} ensures the third
  point, and thus proves Proposition \ref{prop;the_white_album}.

\end{proof}

\subsection{On black vertices: Navigating the Mixed Whitehead Problem}

Recall that we are considering a fixed isomorphism $\phi_J: J_\alpha \to J_\beta$ of the graphs
underlying the JSJ decompositions of $G$ and $G'$. We first mark the images of edge
groups in white vertices of $G$. We make a list exhausting all
possible matching markings of the corresponding white vertices in $G'$
(by isomorphisms preserving fibre and orientation). We pull back these
markings on the edge groups (through the attachment maps toward the white groups), and then we push the markings on the black groups, through the attachment maps of the reversed orientation edges,  
in $G'$ and in $G$.

We thus have, for each choice of white
matching marking, several markings of each black vertex, coming from
the adjacent white vertices. Choosing an order between neighbors, that
is matched by the graph isomorphism, we create the ordered tuple of
these markings.  Call these tuples compounded markings. We have them
for each black vertex of $G$, of $G'$, for each choice of white
matching marking.  We are moreover given  an unmarked isomorphism
between the matched black vertices, that preserves fibre and
orientation, by assumption (2) in the statement of Theorem
\ref{thm;IP_fop}.  We want to know whether one can post-compose these
isomorphisms with automorphisms of the black vertices that preserve
fibre and orientation, and that send the image of the compounded
marking of each $G_b$ to the compounded marking of $G_{b'}$. This is
exactly the fibre-and-orientation preserving mixed Whitehead problem,
as explained in the next proposition.

If $b$ is a black vertex, we denote its image
$\phi_J(b)$ by $ b'$. By assumption, we can decide whether there exists a fibre-and-orientation preserving isomorphism between $G_b$ and $G_{b'}$. If there is none, we cannot promote $\phi_J$ into an isomorphism of graphs of groups.

In the following, we will assume there is at least one such
isomorphism and we shall denote it $\phi_b^{(0)}$.

 \begin{prop}\label{prop;Black_album}
   Assume the fibre preserving mixed Whitehead problem has a solution
   for groups in $\calP_\beta$.

   Then there exists an algorithm such that, given $\phi_b^{(0)}$, and
   given, for each edge $e_j= (b,w_j)$ adjacent to $b$, a fibre
   preserving, and peripheral structures preserving isomorphism
   $\phi_{w_j}: G_{w_j} \to G_{w'_j}$, will indicate whether there is
   an isomorphism $G_b \to G_{b'}$ that preserves the fibre, the
   orientation, the unmarked ordered peripheral structures, and that
   peripherally coincides with
   $ t_{e_j'} \circ t_{\overline{e_j'}}^{-1} \circ \phi_{w_j} \circ
   t_{\overline{e_j}} \circ t_{e_j}^{-1}$ (i.e.  that, for all $j$,
   in restriction to $t_{e_j} (G_{e_j})$,   coincide to this map
   postcomposed by a conjugation by an element $p_{e_j}$ of $G_{b'}$).

   \end{prop}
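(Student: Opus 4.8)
The plan is to recognize the question as an instance of the fiber-and-orientation preserving Mixed Whitehead Problem (MWP) for the group $G_{b'}$, and then invoke assumption (4) of Theorem \ref{thm;IP_fop} (the solvability of the fiber-and-orientation preserving MWP for groups in $\calP_\beta$, which contains $G_{b'}$ since $b'$ is a black, hence elementary, vertex). First I would unwind the data: for each edge $e_j=(b,w_j)$ the map $i_{e_j'}\circ i_{\overline{e_j'}}^{-1}\circ \phi_{w_j}\circ i_{\overline{e_j}}\circ i_{e_j}^{-1}$ is an isomorphism from the subgroup $i_{e_j}(G_{e_j})\leq G_b$ onto the subgroup $i_{e_j'}(G_{e_j'})\leq G_{b'}$; precomposing with the fixed $\phi_b^{(0)}$ (which sends $G_b$ to $G_{b'}$ and is fiber-and-orientation preserving), we obtain, on each edge subgroup of $G_b$, an automorphism of its image in $G_{b'}$ (up to the ambient conjugation by $p_{e_j}$ that ``peripherally coincides'' permits). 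The question of whether $\phi_b^{(0)}$ can be corrected by a fiber-and-orientation preserving automorphism of $G_{b'}$ so as to realize all these prescribed maps simultaneously, up to conjugacy on each edge group, is precisely an orbit question for a tuple of conjugacy classes of tuples of elements of $G_{b'}$ under the action of $\Outfo{G_{b'}}$.

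The key steps, in order, are as follows. (i) Compute generating tuples for each edge subgroup $i_{e_j}(G_{e_j})$ of $G_b$; push them forward by $\phi_b^{(0)}$ to obtain, for each $j$, a tuple $\bar{u}_j$ of elements of $G_{b'}$ generating $\phi_b^{(0)}(i_{e_j}(G_{e_j}))$. (ii) Compute, for each $j$, the target tuple $\bar{v}_j$, namely the image under $i_{e_j'}\circ i_{\overline{e_j'}}^{-1}\circ\phi_{w_j}\circ i_{\overline{e_j}}\circ i_{e_j}^{-1}$ of the corresponding generating tuple of $G_{e_j}$; these are tuples in $G_{b'}$. Fix the order of the edges adjacent to $b$, matched by $\phi_J$, so that $(\bar{u}_1,\dots,\bar{u}_k)$ and $(\bar{v}_1,\dots,\bar{v}_k)$ are honest ordered tuples of tuples — the compounded markings described in the text. (iii) The existence of the desired correcting automorphism is now equivalent to asking whether the conjugacy class of $(\bar{u}_1,\dots,\bar{u}_k)$ and that of $(\bar{v}_1,\dots,\bar{v}_k)$ lie in the same orbit under $\Outfo{G_{b'}}$: indeed an automorphism $\theta\in\Aut_{\mathrm{fo}}(G_{b'})$ with $\theta(\bar{u}_j)$ conjugate (in $G_{b'}$) to $\bar{v}_j$ for each $j$ is exactly a fiber-and-orientation preserving automorphism such that $\theta\circ\phi_b^{(0)}$ peripherally coincides with the prescribed maps, and conversely. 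Since this is by definition the fiber-and-orientation preserving MWP for $G_{b'}\in\calP_\beta$, assumption (4) of Theorem \ref{thm;IP_fop} furnishes the algorithm, and the answer it returns answers the proposition. One subtlety to address: the MWP as stated is about orbits of conjugacy classes of \emph{tuples of conjugacy classes}, so I would first reduce the ``conjugacy class of a generating tuple of a subgroup'' to the appropriate marked datum — but this is the standard translation already set up in \cite[\S 2.1]{DT_invent_2019} and recalled in the discussion of marked peripheral structures above, so it is routine.

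The main obstacle is bookkeeping rather than mathematical depth: one must be careful that the peripheral structures being preserved (the unmarked ordered structures $\calP(b)$ and the edge structures) are tracked correctly through the attachment maps $i_{e_j}, i_{\overline{e_j}}$, and that the ``peripherally coincide'' relation — which allows an edge-dependent conjugation $p_{e_j}$ — is exactly what is encoded by passing to conjugacy classes of the tuples $\bar{u}_j,\bar{v}_j$ in $G_{b'}$, not conjugacy in some edge group. In particular one should check that the composition $i_{e_j'}\circ i_{\overline{e_j'}}^{-1}$ is well-defined as a partial map (it is, since $i_{\overline{e_j'}}$ is injective with image a subgroup of $G_{b'}$ containing the relevant element), and that replacing $\phi_{w_j}$ within its own orbit does not change the orbit of $\bar{v}_j$ in the relevant way — but here the proposition only asks for the conclusion given the specific $\phi_{w_j}$, so this last point need not be belabored. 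Once the translation to an $\Outfo{G_{b'}}$-orbit problem is in place, the proof is complete by assumption (4).
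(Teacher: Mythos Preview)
Your proposal is correct and follows essentially the same approach as the paper: both reduce the question, via the fixed isomorphism $\phi_b^{(0)}$, to asking whether a fiber-and-orientation preserving automorphism of $G_{b'}$ carries one tuple of conjugacy classes of edge-group markings to another, and then invoke the assumed solvability of the fiber-and-orientation preserving Mixed Whitehead Problem in $G_{b'}$. Your write-up is in fact more explicit about the bookkeeping (the construction of the tuples $\bar u_j, \bar v_j$ and the role of the edgewise conjugators $p_{e_j}$) than the paper's own terse proof.
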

\begin{proof}
For each edge group $G_e=G_{\bar e}$, take a generating set: it defines a marking of the attachement subgroups of adjacent vertex groups. Given $\phi_b^{(0)}$, one  looks for a fibre and orientation
preserving  automorphism of $G_{b'}$ that sends, for each adjacent
edge $e'_j$,   the conjugacy classes of the marking  of
$t_{e_j} (G_{e_j})$,  on the marking of   $ t_{e_j} (
i^{-1}_{\bar{e}'_j}  (  \phi_{w_j} (t_{\bar e_j}(G_{\bar{e_j}} )) ))
$.

Therefore, the problem to solve  is a reformulation of the fibre-and-orientation preserving mixed Whitehead problem in $G_{b'}$, which is given by assumption.
\end{proof}

   \subsection{Parts of the fibre in vertex groups}

Recall that we want to determine whether there exists an isomorphism $\torus \alpha \to \torus  \beta$ that is fibre and orientation preserving.

Recall that $\bbJ_\alpha$ and $\bbJ_\beta$ are the canonical JSJ
decompositions of $\torus \alpha$ and $\torus \beta$.  We say that an
isomorphism of graphs of groups $\Psi: \bbJ_\alpha \to \bbJ_\beta$,
noted $\Psi= (\psi_J, (\psi_v)_v, (\psi_e)_e, (\gamma_e)_e)$ (for
$\gamma_e \in G_{\tau(\psi_J(e))}$) is \define{fibre and orientation
  preserving for the vertices}, if  each $\psi_v$ is fibre
preserving, and at least one of $\psi_v$ or $\psi_e$ is orientation
preserving (it is easy to see that it forces all $\psi_v$ to be
orientation preserving).

In spite of these conditions, it is not automatic that the whole fibre
is preserved by $\Psi$.  This motivates the following definition.

Recall that we introduced, in Section \ref{sec;autom_gog}, the
definition of small modular group, as a subgroup of the automorphism
group of a graph of groups. We say that an isomorphism $\Psi$ as
described above is \define{fibre-controlled} 
if there exists an isomorphism
$\Psi^{(0)}= (\psi_J, (\psi^{(0)}_v), (\psi^{(0)}_e),
(\gamma^{(0)}_e))$ from $\bbJ_\alpha$ to $\bbJ_\beta$, that is the
composition of a fibre and orientation preserving isomorphism, with an
element of the small modular group of $\bbJ_\beta$, and such that,
there is a resolving Dehn filling of $\torus \beta$ in which the image
of $\psi_v$ and $\psi^{(0)}_v$ coincide,   
 and the images of $\psi_e$
and $\psi^{(0)}_e$ coincide too. Thus, in general the collections
$(\psi^{(0)}_v), (\psi^{(0)}_e)$ will be different from
$(\psi_v), (\psi_e) $, but will agree in a resolving Dehn filling.

\begin{prop}\label{prop;output_controlled_in_vertices}
  There exists an algorithm that, given $\torus \alpha, \torus \beta$,
  and $\bbJ_\alpha, \bbJ_\beta$ as above, terminates and with an
  output $\calO$ such that

\begin{itemize}
\item if there exists an isomorphism $\torus \alpha \to \torus \beta$
  that is fibre and orientation preserving, the algorithm outputs
  $\calO$, a non-empty finite collection of isomorphisms of graphs of
  groups $\bbJ_\alpha \to \bbJ_\beta$, that are fibre and orientation
  preserving for the vertices. Furthermore, at least one isomorphism
  in $\calO$ is fibre-controlled.

\item if there no such fibre and orientation preserving isomorphisms
  $\torus \alpha \to \torus \beta$, but if there exists
  $\Psi: \bbJ_\alpha \to \bbJ_\beta$ that is fibre and orientation
  preserving for the vertices, the algorithm outputs $\cal O$, a
  non-empty finite collection of such isomorphisms of graphs of
  groups.
  
\item If there is no $\Psi: \bbJ_\alpha \to \bbJ_\beta$ that is fibre
  and orientation preserving for the vertices, the algorithm outputs
  $\calO$,
  the empty set.
\end{itemize}

\end{prop}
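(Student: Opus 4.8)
The plan is to assemble the graph-of-groups isomorphism vertex by vertex, white vertices first, then black vertices, and then to bookkeep carefully which data have been determined exactly and which only up to a resolving Dehn kernel. We begin with the fixed graph isomorphism $\phi_J\colon J_\alpha\to J_\beta$ (there are finitely many candidates, and we loop over them; if none works we output the empty set, which is the third bullet). For each white vertex $w$ with image $w'=\phi_J(w)$, Proposition \ref{prop;the_white_album} produces the finite list $\calL_w$: it is empty exactly when $G_w$ and $G_{w'}$ (with their ordered peripheral structures $\calP,\calE_\calZ,\calE_\calP$) admit no fiber-and-orientation preserving isomorphism, in which case the current $\phi_J$ cannot be promoted and we move to the next candidate. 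Otherwise $\calL_w$ is a nonempty finite list of such isomorphisms with the key property that \emph{every} fiber-and-orientation preserving isomorphism $\psi\colon G_w\to G_{w'}$ agrees, after a conjugation in $G_{w'}$ and modulo a resolving Dehn kernel $K'$ of $G_{w'}$, with some $\phi\in\calL_w$.

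Next I would handle the black vertices. For each black vertex $b$ with image $b'=\phi_J(b)$, assumption (2) of Theorem \ref{thm;IP_fop} gives a fiber-and-orientation preserving isomorphism $\phi_b^{(0)}\colon G_b\to G_{b'}$ (if there is none, abort this $\phi_J$). Now for each edge $e_j=(b,w_j)$ incident to $b$, and each choice of $\phi_{w_j}\in\calL_{w_j}$, the isomorphism $\phi_{w_j}$ transports the marking of $\calE_\calZ(w_j)\cup\calE_\calP(w_j)$ that is pulled back from $G_b$ through the edge inclusions; this gives a compounded marking on $G_b$ and a target compounded marking on $G_{b'}$. Proposition \ref{prop;Black_album} then decides, using the fiber-and-orientation preserving Mixed Whitehead Problem in $G_{b'}$ (assumption (4)), whether $\phi_b^{(0)}$ can be post-composed by a fiber-and-orientation preserving automorphism of $G_{b'}$ matching these compounded markings; when it can, we record the resulting isomorphism $\psi_b\colon G_b\to G_{b'}$. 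Ranging over all finitely many choices of $(\phi_{w_j})_j$ over all white vertices and all black vertices, and for each such global choice ranging over all $\phi_J$, we obtain a finite collection of tuples $(\psi_J,(\psi_v)_v,(\psi_e)_e)$; adjusting the Bass elements $(\gamma_e)_e$ so that the Bass diagram is satisfied (which is possible precisely when the vertex and edge isomorphisms are compatible on edge groups up to conjugacy, exactly what the marking-matching in Proposition \ref{prop;Black_album} secures), we get the desired finite collection $\calO$ of isomorphisms of graphs of groups $\bbJ_\alpha\to\bbJ_\beta$ that are fiber and orientation preserving for the vertices. That each member of $\calO$ is fiber-and-orientation preserving for the vertices is immediate from the construction; if $\calO$ is nonempty but no global fiber-and-orientation preserving isomorphism $\torus\alpha\to\torus\beta$ exists, we are in the second bullet and there is nothing further to prove.

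The substantive point is the first bullet: if some fiber-and-orientation preserving isomorphism $\Theta\colon\torus\alpha\to\torus\beta$ exists, then $\calO$ is nonempty \emph{and} contains a fiber-controlled member. Since $\bbJ_\alpha,\bbJ_\beta$ are canonical JSJ decompositions and $\Theta$ is an isomorphism, $\Theta$ realizes some graph isomorphism $\phi_J$ and restricts, on each vertex group, to an isomorphism $G_v\to G_{\phi_J(v)}$ conjugate (in $G_{\phi_J(v)}$) to a fiber-and-orientation preserving one; moreover the edge data of $\Theta$ agree with the edge inclusions up to conjugacy. Feeding this $\phi_J$ into the loop above, the white-vertex restrictions of $\Theta$ are, by the third bullet of Proposition \ref{prop;the_white_album}, matched by some $\phi_{w}\in\calL_w$ modulo a resolving Dehn kernel $K'_w$ of $G_{w'}$; with that choice of $(\phi_w)_w$, the black-vertex restrictions of $\Theta$ solve precisely the instance of the Mixed Whitehead Problem handled by Proposition \ref{prop;Black_album}, so the corresponding $\psi_b$ gets recorded and a global tuple built from these choices lands in $\calO$. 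Finally, one checks that this tuple is fiber-controlled: the definition requires an auxiliary $\Psi^{(0)}$, obtained from a genuine fiber-and-orientation preserving isomorphism composed with a small modular element of $\bbJ_\beta$, whose vertex and edge parts agree with those of the recorded tuple inside a single resolving Dehn filling of $\torus\beta$ --- and $\Theta$ itself (pushed into $\bbJ_\beta$ and corrected by the conjugating elements $g$ appearing in the Propositions, which are realized by small modular elements, i.e. products of Dehn twists over edges) plays the role of $\Psi^{(0)}$, the agreement in a resolving Dehn filling being exactly the content of the "modulo $K'$" clauses in Propositions \ref{prop;the_white_album} and the matching Dehn-filling statement in Proposition \ref{prop;double_DF}. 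I expect the main obstacle to be precisely this last verification: reconciling the \emph{separate} resolving Dehn kernels $K'_w$ produced at the various white vertices into a coherent fibered Dehn filling of the whole of $\torus\beta$ (or arguing that one may pass to a common deep enough $m$, which the "for all $m_0$ there is a resolving Dehn filling with $m>m_0$" clause of the Lemma permits), and making sure the conjugating elements $g$ collected along the way genuinely lie in the small modular group rather than merely in the ambient group.
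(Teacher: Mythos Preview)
Your construction of the algorithm matches the paper's exactly: enumerate graph isomorphisms $\phi_J$, build the lists $\calL_w$ at white vertices via Proposition~\ref{prop;the_white_album}, handle black vertices via assumption~(2) and Proposition~\ref{prop;Black_album}, and assemble the Bass data. The second and third bullets follow as you say.

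The genuine gap is in your verification that some member of $\calO$ is fiber-controlled. You want to set $\Psi^{(0)}=\mu\circ\Theta$ where $\mu$ is the small-modular element realising the conjugations by the elements $g\in G_{w'}$ from Proposition~\ref{prop;the_white_album}. Your two stated worries (reconciling the Dehn kernels into a common one, and whether conjugation by $g$ is small modular) are indeed minor. But you miss the real obstruction: at a black vertex $b$, the algorithm records only \emph{some} solution $\phi_b$ of the Mixed Whitehead instance, not $\Theta|_{G_b}$, and small-modular elements act on vertex groups only by inner automorphisms while acting as the identity on edge groups. So there is no reason your $\psi_b^{(0)}=\ad_{\gamma_{b'}}\circ\Theta|_{G_b}$ should agree with the recorded $\phi_b$ in any Dehn filling, and likewise no reason $\theta_e$ should agree with the recorded $\phi_e$.

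The paper resolves this from the other side. It writes $\Phi\circ\Psi^{-1}$ (with $\Psi=\Theta$) as $(Id,(\epsilon_v\circ\ad_{g_v}),(\epsilon_e\circ\ad_{g_e}),(\eta_e))$ with each $\epsilon_v$ trivial in the resolving Dehn filling, and then seeks a fiber-and-orientation preserving automorphism $\Upsilon=(Id,(\epsilon_v^{-1}),(\epsilon_e^{-1}),(y_e))$ so that $\Upsilon\Phi\Psi^{-1}$ lands in the small modular group. The crucial step, which has no counterpart in your sketch, is showing the Bass conjugators $y_e$ can be chosen \emph{in the fiber}: since $\epsilon_{t(e)}^{-1}$ is trivial in the Dehn filling, any element conjugating an adjacent edge group to its image projects into the centraliser of the image of that edge group, and the \emph{$\alpha$-certification} property of resolving Dehn fillings forces this centraliser to equal the image of the centraliser of the edge group itself; the conjugator therefore lies in (edge group)$\cdot$(Dehn kernel) and can be taken in the Dehn kernel, hence in the fiber. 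It follows that $\Upsilon$ sends every Bass generator $e$ to $y_{\bar e}^{-1}ey_e$ in the same $F$-coset, so $\Upsilon$ is globally fiber and orientation preserving. Then $\Psi^{(0)}:=\Upsilon\Phi$ is simultaneously equal to (small modular)$\circ\Psi$ and agrees with $\Phi$ vertex-by-vertex in the Dehn filling (because the $\epsilon_v^{-1}$ vanish there), which is exactly the fiber-controlled condition. This use of $\alpha$-certification to force the $y_e$ into the fiber is the heart of the argument and is what your proposal is missing.
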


\begin{proof}

Again we work with a fixed  graph isomorphism $\psi_J: J_\alpha \to J_\beta$ between the underlying graphs $J_\alpha, J_\beta$. 

Let $\calW$ be the set of white vertices of $\bbJ_\alpha$, and $\calB$
the set of black vertices of  $\bbJ_\alpha$.

For all white vertices $w\in \calW$, we use Proposition
\ref{prop;the_white_album} to get lists $\calL_w$ of isomorphisms from
$G_w$ to $G_{\psi_J(w)}$ that are fibre, orientation, and peripheral structure
preserving, and that satisfy the conditions of the third point of
Proposition \ref{prop;the_white_album}.   If one of them is empty,
 we may discard $\psi_J$, hence  we  assume that all are non-empty.

For every $b\in \calB$, one may decide whether there exists
a fibre-and-orientation preserving isomorphism $\phi_b^{(0)} : G_b \to
G_{\psi_J(b)}$. If for some $b$ there is none, we may discard
$\psi_J$, otherwise, we compute such an isomorphism for each $b\in \calB$.

For every tuple $(\phi_w)_{w\in \calW} \in \prod_{w\in \calW}
\calL_w$, we use Proposition \ref{prop;Black_album} for each black
vertex in $\calB$  to establish whether there exists, for each black
vertex $b$, a fibre and orientation preserving isomorphism $G_b\to
G_{b'}$ that agrees with the marking of the neighboring white vertices
(in the sense of Proposition \ref{prop;Black_album}).

If, given $(\phi_w)_{w\in \calW} \in \prod_{w\in \calW} \calL_w$, for
some $b\in \calB$, it is revealed that it is impossible to find such
an isomorphism, then again we may discard this tuple. Otherwise, we
compute such isomorphisms $\phi_b: G_b\to G_{\phi_J(b)}$ for each
$b\in \calB$.

Assume that one has found, for the given $\psi_J$, and a given $(\phi_w)_{w\in \calW} \in \prod_{w\in \calW}
\calL_w$,  isomorphisms
 $(\phi_b)_{b\in \calB}$ as above. Then one has a
complete collection, $(\phi_v)_v$ of isomorphisms of vertex
groups. For each edge $\{e, \bar e\}$, selecting an orientation
 unambiguously fixes $\tau(e)$, and by restriction of $\phi_{\tau(e)}$   one may define
$\phi_e$. One also  defines $\gamma_e$ for each $e$ such that $\tau(e) =
b\in \calB$
as the element $p_e$ given by Proposition \ref{prop;Black_album},
and for $\tau(e) =w \in \calW$, as an element that conjugates $\phi_w
(t_e(G_e))$ to $t_{\psi_J(e)} (G_{\psi_J(e)})$ (which exists since
$\phi_w$ preserves the ordered peripheral structure).  

One thus obtains that  $(\psi_J,
(\phi_v), (\phi_e), (\gamma_e)   )$ is a graph of groups isomorphism,
as it satisfies the commutativity of Bass diagram  (see Section \ref{sec;autom_gog}).

The algorithm then either has discarded $\psi_J$, or, for each tuple $(\phi_w)_{w\in \calW} \in \prod_{w\in \calW}
\calL_w$,  outputs  a graph
of groups isomorphism $(\psi_J,
(\phi_v), (\phi_e), (\gamma_e)   )$ for this 
$\psi_J$, if one exists.  
Thus the eventual output $\calO$ of the algorithm is a finite collection
of such isomorphisms, for $\psi_J$ and for $(\phi_w)_{w\in \calW} \in \prod_{w\in \calW}
\calL_w$ ranging over the set of graph
isomorphisms for which the process has been completed. We call this
collection the \define{selected isomorphisms}.

We need three lemmas. The first is
an observation.

\begin{lem}\label{lem;Ononempty} Assume that there is $\Psi^{(0)} = ( \psi_J,  (\psi_v), (\psi_e), (\gamma_e^{(0)}))$ an isomorphism of graph of groups that preserves the fibre and the orientation in the vertices.
	
	 Then there exists, for each white vertex $w$, an  isomorphism
         $\phi_w \in \calL_w$,    that satisfies the conclusion of
         Proposition \ref{prop;the_white_album}   for $\psi=\psi_w$,
         and for each black vertex $b$,  an isomorphism  $\phi_b$
         as above, agreeing on its neighboring edge groups with the
         isomorphisms of its adjacent white vertices.  In particular, the output $\calO$ is non-empty.
\end{lem}
\begin{proof}
	  The existence of $\phi_w$ is precisely  Proposition \ref{prop;the_white_album}, and determines the edge isomorphisms $\phi_e$.  The isomorphisms $\phi_b$ can be taken to be the $\psi_b$ since $\phi_w$ agrees with $\psi_w$ on its adjacent edge groups.  Finally, the elements  $(\gamma_e)_e $ can be taken to be  $(\gamma_e^{(0)})_e $ since the Bass diagram (of Section \ref{sec;autom_gog}) depends only on the restriction of the isomorphisms to edge groups.   
	\end{proof}

The second is the following. It is similar to  \cite[Lemma 2.7,  2.8 ]{D_MSJ}.

\begin{lem} \label{lem;216} If  there is $\Psi$, an isomorphism of graphs of groups, that preserves the fibre and the orientation in the whole groups $\torus  \alpha$, $\torus  \beta$, then there is a selected isomorphism of graph of groups, that, not only preserves the fibre and orientation in the vertices, but also is   fibre-controlled.  
\end{lem}

\begin{proof}	
	Let $\Psi= ({\psi_J}, (\psi_v)_v, (\psi_e)_e, \gamma_e)$ as in the statement.  By the composition rule of isomorphisms of graphs of groups, we have \[\Psi^{-1} = ( \psi_J^{-1}, ( \psi_{{\psi_J}^{-1}(v)}^{-1})_{v}, (\psi_{{\psi_J}^{-1}(e)}^{-1})_{e},  (  \psi^{-1}_{{\psi_J}^{-1}(\tau(e))}  ( (\gamma_{{\psi_J}^{-1}(e)})^{-1} )    )_e ).\] 
	
	By construction of $\calO$, there is a selected isomorphism
        $\Phi = (\psi_J, (\phi_v)_v, (\phi_e)_e, \gamma'_e)$, and a
        resolving Dehn filling of $\torus \beta$, such that
         in this Dehn filling, all $\psi_v$
        and $\phi_v$ coincide, up to conjugation in
        $\overline{G_{{\psi_J}(v)}}$, and moreover, $\Phi$ (as any element of
        $\calO$) preserves the fibre and the orientation in the
        vertices.
	
	Consider the composition $\Phi \circ \Psi^{-1}: \torus \beta
        \to \torus \beta$. It is an automorphism of $\bbJ_\beta$, and
        it can be written as  \[\Phi \circ \Psi^{-1} =(Id_{J_\beta},
        (\epsilon_v\circ \ad{g_v})_v ,   (\epsilon_e\circ \ad{g_e})_e
        ,    (\eta_e)_e   )\]
        in which $\epsilon_v$ is a fibre-and-orientation preserving automorphism of $G_v$, that
        induces the identity in $\overline{G_v}$, and $g_v$ is an
        element of $G_v$, and similarily for $\epsilon_e$ and $g_e$.
        For the record,  $\eta_e =  \phi_{{\psi_J}^{-1} (\tau(e))}  (  \psi_{{\psi_J}^{-1}(\tau(e))}^{-1}
        ( (\gamma_{{\psi_J}^{-1}(e)})^{-1} )  ) \gamma'_{{\psi_J}^{-1}(e)}$, but
        this plays no role in the argument.

	We need to show that   some post-composition of this
        automorphism with an element of the small modular group is
        fibre and orientation preserving.  In other words, we need to
        show that there is a fibre and orientation preserving
        automorphism $\Upsilon$ such that $\Upsilon\circ  \Phi \circ \Psi^{-1}
        $ is in the small modular group.

We will consider $\Upsilon$ of the form $\Upsilon = (Id, (\epsilon_v^{-1})_v,
 (\epsilon_e^{-1})_e, (y_e)_e )$, where the $y_e$ are yet unknown. It suffices
then to show that there is such an automorphism with the $y_e$ such
that $\Upsilon$ preserves the fibre and orientation.

We begin to prove that there is such an automorphism $\Upsilon$ with all
$y_e$ in the fibre (compare to \cite[Lemma 2.7]{D_MSJ}).

$\Upsilon$ being an automorphism of graphs of groups, it is obvious that
each $\epsilon_v^{-1}$  sends each adjacent edge group to a conjugate
of itself, by a conjugator in $G_v$.  Since $\epsilon_v^{-1}$ induces
the identity on a resolving Dehn filling,  such a  conjugator must be
in the pre-image of the centralizer of the image of the edge
group. However, because the Dehn filling is resolving, the image of
the centralizer of the edge group  in  $G_v$ is the centraliser of the
image of the edge group in $\overline{G_v}$. The centralizer of the
edge group  in  $G_v$ is not other than the edge group itself in
$G_v$, therefore, the conjugator that we considered is a product of an
element of the
edge group  in  $G_v$ and an element of the Dehn kernel. 
Therefore, it can be chosen in
the Dehn kernel, hence in the fibre.

Now, it suffices to choose the $y_e$ to
be these conjugating elements that send the edge groups to their images by
$\epsilon_v^{-1}$, and are in the fibre,   and then to choose $\epsilon_e$ the induced
automorphism of the edge groups,  in order to make
the Bass diagrams commute. Thus, the elements $y_e$ are in the fibre.

We  check that this choice is fibre and orientation
preserving.   (Compare to \cite[Lemma 2.8]{D_MSJ})

To see this, let the fibre  $F$ act on the tree $J_\beta$. This action
defines a free decomposition of $F$ as a graph of groups, hence $F$ is
generated by its intersection with the vertex groups, and the Bass
generators (corresponding to edges of the graph of groups.  The
automorphism $\Upsilon$ preserve the cosets of the fibre in the vertex
groups, and sends the Bass generator $e$ to $y_{\bar e}^{-1} e y_e$.

Since $y_e$ and $y_{\bar e}$ are in the fibre, the image of $e$ is in
the same  coset of the fibre as $e$. It follows that the generators of
$F$ given by the graph of groups decomposition are sent in $F$, thus
$\Upsilon$ is fibre preserving.

Finally, it is orientation preserving because it is so on any edge
groups, so it must be the case for the whole group. The Lemma is proved.

\end{proof}

  \begin{lem}  If  there is no isomorphism of graphs of groups from
    $\bbJ_\alpha $ to   $ \bbJ_\beta$  that is   fibre and orientation
    preserving for the vertices, then the output $\calO$ is empty. 
  \end{lem}
  \begin{proof} By construction of the output $\calO$ (see before Lemma \ref{lem;Ononempty}), its elements are isomorphisms of graphs of groups that when restricted to white vertices are in lists $\calL_w$, hence fiber and orientation preserving for the vertices (by Proposition
  \ref{prop;the_white_album}). The lemma follows.
  \end{proof}
  
  The three lemmas above together ensure the proposition holds.
\end{proof}

\subsection{The rest of the fibre and completing the proof of Theorem
  \ref{thm;IP_fop}.  }

 We now finish the proof of Theorem \ref{thm;IP_fop}. We assume that
 the output of the algorithm of Proposition
 \ref{prop;output_controlled_in_vertices} is a non-empty set $\calO$
 of isomorphisms of graphs of groups, that are fibre and orientation
 preserving for the vertices.  However, we do not yet know whether we
 are in the first or the second case of the conclusion of the
 Proposition \ref{prop;output_controlled_in_vertices}.

 Consider $\{h_1, \dots h_s\}$, a generating set  of the fibre $F$ of $\torus \alpha$.

 \begin{prop} \label{prop;only_the_h_i}

 	There exists a fibre and orientation preserving automorphism from $\torus \alpha$ to $\torus \beta$  if and only if, there is an element $\Phi$ of $\calO$, and an element $\eta$ of  the small modular group of $\bbJ_\beta$ that sends $\Phi(h_1), \dots, \Phi(h_s)$ inside the fibre of $\torus \beta$, and such that 
 	    $\eta \circ \Phi$ is fibre and orientation preserving on the vertices. 
  \end{prop}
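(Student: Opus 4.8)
The plan is to prove the equivalence by unwinding the definitions on each side and then transferring the fiber-preservation question from the whole group $\torus\alpha$ to the finite generating tuple $\{h_1,\dots,h_s\}$, using the graph-of-groups structure recorded in $\calO$.

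For the easy direction, suppose $\eta$ and $\Phi$ are as in the statement. Then $\eta\circ\Phi\colon\bbJ_\alpha\to\bbJ_\beta$ is an isomorphism of graphs of groups, fiber and orientation preserving on the vertices by hypothesis. I would then argue exactly as in the proof of the previous lemma (the fiber-preservation criterion via the action of $F$ on the tree $J_\beta$, following \cite[Lemma 2.8]{D_MSJ}): the fiber $F$ acts on $J_\beta$, the induced free decomposition exhibits $F$ as generated by its intersections with the vertex groups together with the Bass generators, and $\eta\circ\Phi$ preserves cosets of the fiber in the vertex groups and sends each Bass generator $e$ to $y_{\bar e}^{-1} e y_e$. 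Since by assumption $\eta\circ\Phi$ sends the generators $h_1,\dots,h_s$ of $F$ into the fiber of $\torus\beta$, and since these generators together with the vertex-group and Bass-generator behavior control all of $F$, the image of $F$ lands in the fiber of $\torus\beta$; orientation preservation then follows because it holds on any edge group. Hence $\eta\circ\Phi$ is a fiber and orientation preserving isomorphism $\torus\alpha\to\torus\beta$.

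For the converse, suppose there is a fiber and orientation preserving isomorphism $\torus\alpha\to\torus\beta$. Then in particular there is an isomorphism of graphs of groups from $\bbJ_\alpha$ to $\bbJ_\beta$ preserving fiber and orientation in the whole group, so by the previous lemma there is a selected isomorphism $\Phi\in\calO$ that is fiber-controlled. By the definition of fiber-controlled, there is $\Phi^{(0)}$ which is the composition of a fiber and orientation preserving isomorphism $\Psi'$ with an element of the small modular group of $\bbJ_\beta$, and which agrees with $\Phi$ on all vertex and edge groups in a resolving Dehn filling of $\torus\beta$. I would take $\eta$ to be the inverse of that small-modular-group element (or, more precisely, the small modular element witnessing $\Phi^{(0)} = (\text{small modular})\circ\Psi'$, so that $\eta\circ\Phi$ and $\Psi'$ agree modulo a resolving Dehn kernel on all vertices); then $\eta\circ\Phi$ is fiber and orientation preserving on the vertices because $\Phi$ is and postcomposition by a small modular element preserves this. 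It remains to check that $\eta\circ\Phi$ sends $\Phi(h_1),\dots$ — i.e. that $\eta$ sends $\Phi(h_i)$ — into the fiber of $\torus\beta$: since $\Psi'$ is fiber preserving it sends $h_i$ into the fiber, and $\eta\circ\Phi$ differs from $\Psi'$ only by an element of a resolving Dehn kernel, which lies in the fiber by construction; hence $\eta(\Phi(h_i))$ lies in the fiber of $\torus\beta$ as well.

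The main obstacle I anticipate is the bookkeeping in the converse direction: matching up the element $\eta$ of the small modular group with the small-modular-group factor implicit in the definition of fiber-controlled, and verifying that the discrepancy between $\eta\circ\Phi$ and the genuinely fiber-preserving $\Psi'$ is concentrated in a resolving Dehn kernel, hence in the fiber. This is where one must use that resolving Dehn kernels lie in the fiber and that, in a resolving Dehn filling, centralizers of edge groups behave well (as exploited in the preceding lemma) so that the correction terms $y_e$ can be taken in the fiber. Once that is in place, the equivalence reduces to the statement that checking fiber-preservation of an isomorphism that is already fiber-controlled and fiber-and-orientation preserving on vertices is the same as checking it on the finite generating set $\{h_i\}$ modulo the small modular group, which is precisely the content of the tree-action argument above.
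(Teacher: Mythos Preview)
Your forward direction is correct but overcomplicated: since $\{h_1,\dots,h_s\}$ already generates $F$, the hypothesis that each $\eta\circ\Phi(h_i)$ lies in the fiber of $\torus\beta$ immediately gives $\eta\circ\Phi(F)\subseteq F_\beta$, and equality follows because $\eta\circ\Phi$ is an isomorphism and both fibers are kernels of surjections onto $\bbZ$. Orientation is then determined on any vertex group. The tree-action and Bass-generator discussion is not needed here; this is exactly the paper's (one-line) argument.

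The converse direction has a genuine gap. You choose $\eta=\sigma^{-1}$, where $\Phi^{(0)}=\sigma\circ\Psi'$ is the witness to fiber-control, and then assert that ``$\eta\circ\Phi$ differs from $\Psi'$ only by an element of a resolving Dehn kernel''. This is not justified. The definition of fiber-controlled only guarantees that the vertex and edge \emph{maps} $\phi_v,\phi_e$ of $\Phi$ agree with those of $\Phi^{(0)}$ modulo the Dehn kernel; it says nothing about the conjugating elements $(\gamma_e)$ versus $(\gamma^{(0)}_e)$. A general $h_i\in F$, written in Bass normal form, involves Bass generators $e$, and there $\Phi(e)=\gamma_{\bar e}^{-1}\phi_J(e)\gamma_e$ while $\Phi^{(0)}(e)=(\gamma^{(0)}_{\bar e})^{-1}\phi_J(e)\gamma^{(0)}_e$; these need not differ by an element of the Dehn kernel. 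So the conclusion $\eta\circ\Phi(h_i)\in F_\beta$ does not follow from what you wrote.

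The paper's own proof is admittedly terse at the corresponding ``Therefore''. What it is relying on is the full strength of the preceding lemma (inside the proof of Proposition~\ref{prop;output_controlled_in_vertices}), where it is shown not merely that $\Phi$ is fiber-controlled in the sense of the definition, but that $\Upsilon\circ\Phi\circ\Psi^{-1}$ actually lies in the small modular group for a fiber-and-orientation preserving $\Upsilon=(Id,(\epsilon_v^{-1}),(\epsilon_e^{-1}),(y_e))$ with the $y_e$ in the fiber. That $\alpha$-certification / centralizer argument is exactly what is needed to control the $(\gamma_e)$ discrepancy on Bass generators and absorb it into a (possibly different) small modular $\eta$. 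Your final paragraph correctly names these ingredients, but you apply them to the wrong comparison: they are needed to relate $\Phi$ and $\Phi^{(0)}$ on Bass generators, not merely to observe that resolving Dehn kernels lie in the fiber.
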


	\begin{proof}
If there is such an element $\Phi$, and such an $\eta$, then  $\eta \circ \Phi$ is fibre and orientation preserving, because the fibre in $\torus \alpha$ is generated by the fibre in the vertices, and the elements $h_1, \dots, h_s$, and all this generating set is indeed sent to the fibre of $\torus \beta$. 

Assume now conversely that there exists a fibre and orientation preserving automorphism from $\torus \alpha$ to $\torus \beta$. Then we are in the first case of   Proposition \ref{prop;output_controlled_in_vertices}. There exists a   fibre-controlled  isomorphism \[\Phi = (\phi_J, (\phi_v)_v, (\phi_e)_e, (\gamma_e)_e) \; \in \; \calO. \]    
By definition, there is also an  
isomorphism \[\Phi^{(0)} = (\phi_J, (\phi^{(0)}_v)_v, (\phi^{(0)}_e)_e, (\gamma^{(0)}_e)_e),\] for which $\phi_v$ and $\phi^{(0)}_v$  coincide on a resolving Dehn filling, 
and such that some post-composition with an element of the small modular group is fibre and orientation preserving.

Therefore, there exists $\eta$ in the the small modular group of $\bbJ_\beta$ that sends \[\Phi(h_1), \dots, \Phi(h_s)\] inside the fibre of $\torus \beta$, and such that 
 $\eta \circ \Phi$ is fibre and orientation preserving on the vertices.

\end{proof}

It follows from Proposition \ref{prop;only_the_h_i} that we reduced
the problem to deciding, for each $\Phi$ in $\calO$, whether there
exists an element $\eta$ in the small modular group of $\bbJ_\beta$,
sending $\Phi(h_1), \dots, \Phi(h_s)$ into the fibre of
$\torus \beta$.

Given $\Phi$ in $\calO$, the problem of deciding whether such an
element $\eta$ of the small modular group as described above exists is
treated by interpreting it in the cohomology $\mathbb{Z}$-module
$H^1(\torus \beta, \bbZ) = \mathrm{Hom}(\torus\beta,\bbZ)$: each
Dehn-twist over an edge of $\bbJ_\beta$ (i.e. each generator of the
small modular group for the generating family proposed in Section
\ref{sec;autom_gog}) 
acts as a transvection on the module. Let $\bar F$ be the image of $F$
in $H^1(\torus\beta,\bbZ)$. Because the fibre $F$ is the kernel of a
homomorphism $\torus\beta \onto \bbZ$, for all $g \in \torus\beta$,
$g\in F$ if and only if, $\bar g \in \bar F$.

Each image $\overline{\Phi(h_i)}$ is the sum of an element of $\bar F$ and
of a multiple of $\bar t$, the image of $t$. The transvections apply
some translation in projection on the line generated by $t$. The
existence of some element in the  small modular group sending all
$\overline{\Phi(h_i)}$ in $\bar F$ is therefore encoded in an explicit
system of linear diophantine equations, whose existence of solution is
thus decidable.  We thus have proved Theorem \ref{thm;IP_fop}.
 
 \section{Trivial outer automorphisms of free groups}\label{sec:application}

We end this paper with the simplest possible type of maximal polynomial growth subgroup. Not only does this serve as an illustration of our algorithmic properties, but even this simplest case  yields  new results.

In Corollary \ref{cor:hyp-triv-ip} for relatively hyperbolic groups, the result follows relatively easily since edge groups of the mapping tori are virtually $\bbZ^2$. In the case of automorphisms of free groups, edge groups of the mapping tori can be more complicated. Consider the following example.
\begin{example}
Write a free group as $F=\langle x_1,y_1 \rangle*M*F\langle x_2,y_2\rangle$ and write $M = \langle H_1,H_2 \rangle$ where $H_1,H_2$ are subgroups. Let $\alpha$ be an automorphism of $F$ that is pointwise inner on $M$ and that maps the subgroups $K_i=\langle x_i,y_i, H_i \rangle$ to themselves for $i=1,2$. Suppose that $x_i,y_i$ are mapped to sufficiently complicated elements of $K_i$ so that the JSJ decompositions of the mapping tori $K_i \rtimes_{\alpha|_{K_I}}\bbZ$ relative to the maximal polynomial growth sub mapping tori are trivial. If we consider the full mapping torus $F \rtimes_\alpha \bbZ$, we have that the maximal polynomially growing sub mapping torus of $\alpha$ is of the form $M\times\bbZ$, and it will be a parabolic type vertex group of the 
JSJ decomposition of $F \rtimes_\alpha \bbZ$. There will be two non-parabolic vertex groups, isomorphic to the mapping tori $K_i \rtimes_{\alpha|_{K_i}}\bbZ$. The edge groups will be isomorphic to $H_i\times \bbZ$.
\end{example}

Thus, for automorphisms of free groups, or of many-ended hyperbolic groups, there is less control over the types of edge groups that can arise in the JSJ decomposition of mapping tori and a result like Corollary \ref{cor;ung} proven below is what is needed. 

 \subsection{A criterion for congruences to effectively separate torsion.}

\begin{prop}\label{prop;effective_minkowsky}
  Let $G$ be a finitely presented group such that the following
  hold:
  \begin{itemize}
  \item $G$ is conjugacy separable,
  \item for any $\alpha \in \aut G$ such that the image
    $[\alpha] \in \out G $ is non-trivial and of finite order, there exists 
    $g_\alpha \in G$ such that $g_\alpha$ and $\alpha(g_\alpha)$ are
    non-conjugate in $G$, (i.e. $G$ has no \define{pointwise inner} finite-order outer
    automorphisms), and
  \item we are given
  a finite
    list $\{\alpha_1,\ldots,\alpha_k\} \subset \aut G$ containing a
    representative of the conjugacy class of every finite order
    element of $\out G$.
  \end{itemize}
  Then $G$ has effective congruences separating torsion.
\end{prop}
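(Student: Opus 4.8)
The plan is, for each automorphism in the given list that represents a nontrivial element of $\out G$, to extract a single element of $G$ witnessing that this element is not pointwise inner, to separate that witness from its image inside an explicitly computable finite quotient using conjugacy separability, and finally to replace the resulting finite-index normal subgroup by a \emph{characteristic} one of the same bounded index, so that a single $N$ serves all outer automorphisms of $G$ simultaneously. I first record the elementary effectivity facts I will use: since $G$ is conjugacy separable it is residually finite, and being finitely presented it has solvable word problem (enumerate finite quotients to certify $g\neq 1$ against enumerating consequences of the defining relators to certify $g=1$), hence solvable conjugacy problem (enumerate conjugators against enumerating finite quotients in which the two classes differ, the latter search halting exactly by conjugacy separability); moreover one can effectively list all finite quotients of $G$, with their quotient maps, e.g.\ by enumerating the finitely many homomorphisms $G\to S_j$ for each $j$ and testing the relators.

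Write the list as $\{\alpha_1,\dots,\alpha_k\}$ and fix a finite generating set of $G$. For each $i$ I would run two searches in parallel: Search~A enumerates $h\in G$ and tests with the word problem whether $\alpha_i$ coincides with $\ad h$ on the generators, and so halts exactly when $\alpha_i$ is inner; Search~B enumerates $g\in G$ and tests with the conjugacy problem whether $g$ and $\alpha_i(g)$ are conjugate, and so --- by the no-pointwise-inner hypothesis applied to $\alpha_i$ --- halts exactly when $\alpha_i$ is not inner, returning some $g_i$ with $g_i$ not conjugate to $\alpha_i(g_i)$. For every $i$ exactly one of the two searches halts; let $I$ be the set of indices for which Search~B halts. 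For each $i\in I$, once $g_i$ has been produced, I would enumerate the finite quotients $G\onto Q$ and stop at the first one in which the images of $g_i$ and of $\alpha_i(g_i)$ are non-conjugate; this halts by conjugacy separability. Record $m_i:=|Q|$.

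Now set $m:=\max_{i\in I} m_i$ (and $m:=1$ if $I=\varnothing$), and let $N$ be the intersection of all subgroups of $G$ of index at most $m$. Because $G$ is finitely generated, $N$ is a finite intersection of finite-index subgroups, hence of finite index; it is characteristic, as every automorphism of $G$ permutes the subgroups of index at most $m$; and it is computable, being the kernel of the explicit finite product of all homomorphisms $G\to S_j$ with $j\le m$. By construction $N$ is contained in each separating normal subgroup found above. It remains to check that the kernel of $\out G\to\out{G/N}$ --- which is a well-defined homomorphism since $N$ is characteristic --- is torsion-free. Let $[\alpha]\in\out G$ have finite order with $[\alpha]\neq 1$; by hypothesis $[\alpha]$ is conjugate in $\out G$ to some $[\alpha_i]$, necessarily also nontrivial, so $i\in I$. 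The images of $g_i$ and $\alpha_i(g_i)$ are non-conjugate in the quotient $Q$ of order $m_i$, hence a fortiori non-conjugate in $G/N$ (a conjugacy in $G/N$ would push forward to one in $Q$); therefore the automorphism of $G/N$ induced by $\alpha_i$ is not inner, i.e.\ its class in $\out{G/N}$ is nontrivial. Since $\out G\to\out{G/N}$ sends conjugate elements to conjugate elements, the image of $[\alpha]$ is a conjugate of this nontrivial class, hence nontrivial. Thus the kernel contains no nontrivial element of finite order, $N$ witnesses that congruences separate the torsion effectively, and $G$ is effectively Minkowskian.

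The real content lies in making the separation both effective and uniform over $\aut G$. The key device is that a finite quotient separating one pair $\{g_i,\alpha_i(g_i)\}$ need not have characteristic kernel, so one passes instead to the intersection of \emph{all} subgroups of the relevant bounded index, which is characteristic for free while still containing the separating subgroup; this is the step I expect to be the conceptual crux. The remaining points are verifying termination of the searches --- where the no-pointwise-inner hypothesis is used to locate $g_i$ and conjugacy separability to separate it from $\alpha_i(g_i)$ in a finite group --- and observing that it suffices to defeat one representative per conjugacy class of torsion in $\out G$, which is exactly what the characteristicity of $N$ licenses and what makes the finiteness of the given list enough.
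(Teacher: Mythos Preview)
Your proof is correct and follows essentially the same route as the paper's: find for each listed $\alpha_i$ a witness $g_i$ and a finite quotient separating $g_i$ from $\alpha_i(g_i)$, then pass to a single characteristic finite-index subgroup. You are more careful than the paper in two places it leaves implicit --- running a parallel search to handle the possibility that some listed $\alpha_i$ is already inner, and spelling out the construction of the characteristic $N$ as the intersection of all subgroups of index at most $m$ --- but the underlying argument is the same.
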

\begin{proof}
  Let $\{\alpha_1,\ldots,\alpha_k\} \subset \aut G$ be a list of
  representatives of the conjugacy classes of finite order elements in
  $\out G$. Since $G$ is finitely presented, we can enumerate $G$ as
  well as all its finite quotients.  
  For each $\alpha_i$ in
  our list, by this enumeration  we can 
  find some $g_{\alpha_i}\in G$ and a finite
  quotient of $G$ in which the images $\alpha_i(g_{\alpha_i})$ and
  $g_{\alpha_i}$ are not conjugate. It follows that we can find finite
  index characteristic subgroup $K\leq G$ in which the image
  $\bar \alpha_i$ in $\aut{G/K}$ of each automorphism in our list is
  not inner, and the result follows.
\end{proof}

Finitely generated free groups are conjugacy separable, by a result of Baumslag \cite{Baumslag}, and their non-trivial outer-automorphisms are never pointwise inner, by a result of Grossman \cite{Grossman}. 
By Culler's Realization Theorem
\cite{Culler}, to obtain a list of all representatives for each finite
order outer automorphism, it is sufficient to enumerate the finite
number of homeomorphism types of graphs with vertices of degree at
least 3 whose fundamental group is $F_n$, and, for each such graph
graph, to enumerate the graph's symmetry group (which is finite). This
immediately gives.

\begin{cor}\label{cor;free_eff_minsk}
  $F_n$, the free group or rank $n$, has effective congruences separating torsion.
\end{cor}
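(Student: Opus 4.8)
The plan is to verify the three hypotheses of Proposition \ref{prop;effective_minkowsky} for $G = F_n$. The first bullet point is classical: free groups are conjugacy separable (indeed they are subgroup separable, hence conjugacy separable, by a theorem of Stebe, or more classically since a free group is residually finite and conjugacy separability of free groups is well known). The second bullet point — that $F_n$ has no nontrivial pointwise inner outer automorphisms — is a result of Grossman \cite{Grossman}: an automorphism of a free group that sends every element to a conjugate of itself is inner. (This is exactly the statement that $\Out{F_n}$ acts faithfully enough on conjugacy classes; Grossman proved free groups are \emph{conjugacy separable in the sense of automorphisms}, which includes this.) So the only real work is producing the effective list in the third bullet point.

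For the third bullet, I would invoke Culler's Realization Theorem \cite{Culler}: every finite subgroup of $\Out{F_n}$ (in particular every finite cyclic subgroup generated by a torsion element) is realized as a group of symmetries of some finite connected graph $\Gamma$ with $\pi_1(\Gamma) \cong F_n$ and all vertices of valence at least $3$. First I would enumerate, up to homeomorphism, the finitely many such graphs $\Gamma$ (there are only finitely many because the number of edges and vertices is bounded in terms of $n$ when all valences are $\geq 3$). For each such $\Gamma$, its automorphism group $\Aut(\Gamma)$ is finite and explicitly computable, and each graph automorphism induces, after choosing a maximal tree and a basis, an explicit element of $\Aut{F_n}$ whose image in $\Out{F_n}$ has finite order. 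Collecting all these over all $\Gamma$ yields a finite, effectively computable list $\{\alpha_1,\dots,\alpha_k\} \subset \Aut{F_n}$ that, by Culler's theorem, contains a representative of the conjugacy class of every finite-order element of $\Out{F_n}$. This establishes the third hypothesis, and Proposition \ref{prop;effective_minkowsky} then gives that $F_n$ is effectively Minkowskian.

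I do not expect any serious obstacle here: all three inputs (conjugacy separability, Grossman's no-pointwise-inner result, and Culler's realization of finite subgroups) are off-the-shelf, and the only thing to be careful about is making the enumeration in the third point genuinely effective — i.e. bounding the complexity of the graphs $\Gamma$ and writing down the induced automorphisms in $\Aut{F_n}$ concretely. The bound on the graphs follows because a connected graph with first Betti number $n$ and all valences $\geq 3$ has at most $2n-2$ vertices and at most $3n-3$ edges, so there are only finitely many homeomorphism types and they can be listed by brute-force search; the induced automorphism is then read off from the action on a spanning tree. Hence the proof is essentially a matter of citing \cite{Grossman} and \cite{Culler} and noting effectivity, exactly as sketched in the paragraph preceding the corollary statement.
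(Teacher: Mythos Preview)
Your proposal is correct and follows essentially the same approach as the paper: verify the three hypotheses of Proposition \ref{prop;effective_minkowsky}, with the third handled via Culler's realization theorem by enumerating the finitely many valence-$\geq 3$ graphs with fundamental group $F_n$ and their finite symmetry groups. The paper is terser (it simply asserts the first two bullets hold and sketches the Culler enumeration in the paragraph preceding the corollary), while you helpfully supply the Grossman citation for the absence of pointwise-inner outer automorphisms and the explicit vertex/edge bounds making the graph enumeration effective.
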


\subsection{Algorithmic properties of mapping tori of unipotent non-growing automorphisms of free groups}\label{sec;non-growing}

 We finish by mentioning a direct application, which is formally stronger than Proposition \ref{prop;intro}.
 
 \begin{cor}\label{cor;ung}
 	Let $\calP$ be the class of direct products of finitely generated free groups with $\mathbb{Z}$, that we consider to be equipped with an explicit fiber and orientation. Then
 	\begin{enumerate}
 	    \item $\calP$ is hereditarily algorithmically tractable,
 	    \item the fiber an orientation preserving isomorphism problem is decidable with $\calP$,
 	    \item finitely generated subgroups of groups in $\calP$ have effective congruences separating torsion, and
 	    \item the fibre and orientation preserving mixed whitehead problem is solvable for arbitrary tuples of tuples for every group in $\calP$.
 	\end{enumerate} 
 	
\end{cor} 	

It is worth reminding that Corollary 3.3 is not true for torsion-free for hyperbolic groups by \cite{Rips}.

\begin{proof}
First, note that the uniform conjugacy problem, the uniform generation problem are classical in free groups, and in the class $\calP$. 

For groups in $\calP$, the fibre-and-orientation preserving isomorphism problem is solvable, since it is the question of the rank of the direct factor that is free. 

Take $G$ in the class $\calP$. Either $G \simeq \bbZ^2$ or there is a unique cyclic subgroup $C$ for which $G = H\times C$. Observe that $H$ is not unique, but all such factors are free subgroups of $G$ of same rank. Given $G$ in $\calP$ with a fibre and an orientation, the fibre-and-orientation preserving automorphism group is thus isomorphic to $\aut{H}$. The mixed Whitehead problem in $G$  by the fibre and orientation preserving automorphism group  then easily reduces to the mixed Whitehead problem in $H$, which is solvable by Whitehead's algorithm. 

It remains to see that in $\calP$ and its subgroups, congruences effectively separate the torsion. It is Minkowski's theorem if $G \simeq \bbZ^2$. Take $ G =F\times \bbZ$, where $F$ is a non-abelian free group.  

A non-trivial torsion element in $\out{G}$ either has even order, inducing  a flip of orientation of the center, or  maps to a non-trivial torsion element in $\out{F}$ by quotient by the characteristic subgroup $\bbZ$ (the center).  Indeed, assume the contrary. Write $G=F\times C$ with $C=\bk c$ the infinite cyclic center, and $\alpha \in \aut{G}$ such that $\bar \alpha : H\to H$ is inner: $\bar \alpha = \ad{h_0}$.  Since $c$ is central, for all $h\in H$, there is $n_h$ such that $\alpha(h)  = \ad{h_0} (h) c^{n_h}$,   and if $\alpha$ is not inner, there is $h$ such that $n_h\neq 0$, and assume it positive.   Also observe that $\alpha(c)= c^{\pm1}$.  If    $\alpha(c) = c^{-1}$, we are in the first case of the claim. Thus   assume that   $\alpha(c) = c$. It follows (still using that $c$ is central)  that $\alpha^k(h) = \ad{h_0^k} (h) c^{kn_h}$, and it is never inner, and thus $\alpha$ does not have finite order.  

Now, the free group $F$ itself has congruences effectively separating
the torsion by Corollary \ref{cor;free_eff_minsk}.  It follows that
one can effectively find congruence separating all non-trivial torsion
elements in $\out{G}$ except possibly those of even order inducing a
flip of orientation of the center. Those later ones are easily
separated, by choosing a finite quotient of $G$ on which the center
$C$ maps on a subgroup of order $\geq 3$.
\end{proof}

{\small

  }

\noindent {\sc{Fran\c{c}ois Dahmani,   Institut Fourier,  Laboratoire de Mathématiques, Université Grenoble Alpes, CS 40700, 38058 Grenoble cedex 9, France. } }\\
e-mail. {\tt francois.dahmani@univ-grenoble-alpes.fr} \\  \url{https://www-fourier.univ-grenoble-alpes.fr/~dahmani}\\

\noindent {\sc Nicholas Touikan, Department of Mathematics and Statistics, University of New Brunswick, Fredericton, New Brunswick,  Canada.}\\
e-mail. {\tt ntouikan@unb.ca}\\
\url{https://ntouikan.ext.unb.ca}

\end{document}